%% file: main.tex
\PassOptionsToPackage{dvipsnames}{xcolor} 

\documentclass[11pt,letterpaper]{amsart}
\usepackage[a4paper, left=1in, right=1in, top=1.1in, bottom=1.1in]{geometry}

\usepackage[english]{babel}
\usepackage[utf8]{inputenc}
\usepackage{amsmath}
\usepackage{graphicx}
\usepackage{amssymb}
\usepackage{stmaryrd}
\usepackage{amsthm}
\usepackage{tikz-cd}
\usepackage{tikz}
\usetikzlibrary{arrows.meta,calc}
\usepackage{mathrsfs}
\usepackage{csquotes}
\usepackage[colorinlistoftodos]{todonotes}
\usepackage{yfonts}
\usepackage{ dsfont }
\usepackage{xcolor}
\usepackage{multicol}
\usepackage{indentfirst}
\usepackage[hyphens]{url}
\usepackage{hyperref}
\usepackage{thmtools}
\usepackage[hyphenbreaks]{breakurl}
\usepackage{subfiles}
\usepackage{longtable}
\usepackage{float}
\usepackage{subcaption}

\AtBeginDocument{%
  \captionsetup[subfigure]{labelformat=parens}%
}

\definecolor{tocblue}{RGB}{0,51,153}
\colorlet{tocblueLight}{tocblue!90!white} 
\definecolor{mrred}{HTML}{B33333} 
 \usepackage[
    backend=biber,
    style=alphabetic, maxbibnames=99, url=false
  ]{biblatex}
\usepackage{xurl}
\hypersetup{
    colorlinks,
    linkcolor={mrred},
    citecolor={tocblueLight},
    urlcolor={RoyalPurple}
}
\hypersetup{breaklinks=true}

\usepackage[inline]{enumitem}   
\makeatletter

\newcommand{\inlineitem}[1][]{%
\ifnum\enit@type=\tw@
    {\descriptionlabel{#1}}
  \hspace{\labelsep}%
\else
  \ifnum\enit@type=\z@
       \refstepcounter{\@listctr}\fi
    \quad\@itemlabel\hspace{\labelsep}%
\fi}
\makeatother
\allowdisplaybreaks

 \usepackage[
    backend=biber,
    style=alphabetic, maxbibnames=99, url=false
  ]{biblatex}
\usepackage{cleveref}

\title[Extremal Effective Cycles and Nef Line Bundles on \(\M{g}{n}\)]{Extremal Effective Cycles and Nef Line Bundles on \(\M{g}{n}\)}

\author{Daebeom Choi}
\address{Department of Mathematics\\
    University of Pennsylvania\\
    Philadelphia, PA 19104-6395}
\email{dbchoi@sas.upenn.edu}

\date{\today}
\theoremstyle{definition}
\newtheorem{thm}{Theorem}[section]

\newtheorem{lem}[thm]{Lemma}

\newtheorem{prop}[thm]{Proposition}

\newtheorem{defn}[thm]{Definition}

\newtheorem{rmk}[thm]{Remark}



\crefname{thm}{Theorem}{Theorems}
\crefname{lem}{Lemma}{Lemmas}
\crefname{prob}{Problem}{Problems}
\crefname{prop}{Proposition}{Propositions}
\crefname{qes}{Question}{Questions}
\crefname{defn}{Definition}{Definitions}
\crefname{eg}{Example}{Examples}
\crefname{ex}{Exercise}{Exercises}
\crefname{conj}{Conjecture}{Conjectures}
\crefname{defthm}{Definition-Theorem}{Definition-Theorems}
\crefname{cor}{Corollary}{Corollaries}
\crefname{claim}{Claim}{Claims}
\crefname{rmk}{Remark}{Remarks}
\crefname{section}{Section}{Sections}

\newcommand{\R}{\mathbb{R}}

\newcommand{\Z}{\mathbb{Z}}
\newcommand{\N}{\mathbb{N}}
\newcommand{\Q}{\mathbb{Q}}

\newcommand{\HH}{\text{H}}

\newcommand{\M}[2]{\overline{\rm{M}}_{#1, #2}}

\newcommand{\Mg}[1]{\overline{\rm{M}}_{#1}}

\newcommand{\Eff}{\text{Eff}}
\newcommand{\pEff}{\overline{\text{Eff}}}
\newcommand{\nc}{\mathrm{N}}

\DeclareSymbolFont{yhlargesymbols}{OMX}{yhex}{m}{n}
\DeclareMathAccent{\widetriangle}{\mathord}{yhlargesymbols}{"E6} 

\begin{document}
    
\maketitle

\begin{abstract}
    There has been a growing body of work devoted to the study of effective cones of codimension-\(k\) cycles \(\Eff^k(\M{g}{n})\) on \(\M{g}{n}\), the moduli space of \(n\) pointed stable curves of genus \(g\). In this paper, we remove the genus-dependence present in previous bounds on the number of marked points, and prove the following results: (1) \(\Eff^{k}(\M{g}{n})\) has infinitely many extremal rays for \(k\ge 2\), \(g\ge 3\) and \(n\ge 2k-2\), and (2) \(\Eff^k(\M{g}{n})\) is non-polyhedral for  \(k\ge 2\), \(g\ge 1\) and \(n\ge k+5\). Moreover, we show that (3) every rational tails boundary stratum spans an extremal ray. Our method refines that of Chen and Coskun by extending arguments based on morphisms, or equivalently semiample divisors, to a setting that also allows for the use of nef divisors. Certain non-semiample nef divisors on \(\M{g}{n}\), namely so-called semigroup kappa divisors of a particular kind, play a crucial role.
\end{abstract}

\section{Introduction}\label{sec:intro}

\subfile{sections/intro}

\section{Preliminaries}\label{sec:prelim}

\subfile{sections/prelim}

\section{Lemmas on extremality of effective cycles}\label{sec:lemmas}

\subfile{sections/lemmas}

\section{Semigroup kappa divisors}\label{sec:cont}

\subfile{sections/cont}

\section{Injectivity of pushforwards}\label{sec:inj}

\subfile{sections/inj}

\section{Extremal effective cycles on \texorpdfstring{\( \M{g}{n} \)}{TEXT}}\label{sec:ext}

\subfile{sections/ext}

\printbibliography

\end{document}

%% file: sections/intro.tex
The moduli space of stable genus \(g\) curves with \(n\) marked points, \(\M{g}{n}\), is a central object in algebraic geometry. One question about \(\M{g}{n}\) that has attracted considerable attention is to describe its cone of effective divisors \(\Eff^1(\M{g}{n})\), because of its relation to the Kodaira dimension and birational models of \(\M{g}{n}\) (see, e.g. \cite{HM82, EH87, HT02, Ver02, FP05, CC14, Opi16, Mul17, Mul21, CLTU23, FJP23, Mul25}). There has been growing interest in investigating the geometry of a projective variety \(X\) through its effective (resp. pseudo-effective) cones of higher-codimension cycles \(\Eff^k(X)\) (resp. \(\pEff^k(X)\), see \cref{subsec:notation}), e.g. \cite{OJV13, FL16, FL17, FL17b}. Correspondingly, there have been a number of papers devoted to the study of such higher-codimension effective cycles on \(\M{g}{n}\), e.g. \cite{CC15, Sch15, DN16, Mul20, Bla22, Cho25b}.

As explained in \cite{Bla22}, there are two main types of results concerning cones of (pseudo) effective cycles on \(\M{g}{n}\). The first is to determine whether they are polyhedral. The second is to prove that certain special strata in \(\M{g}{n}\) generate extremal rays in \(\Eff^k(\M{g}{n})\) or \(\pEff^k(\M{g}{n})\). In this work, we obtain results of both sorts. 

First, in \cref{thm:main theorem 1}, we identify new cones of effective codimension-\(k\) cycles on \(\M{g}{n}\) that are not polyhedral. Compared to previous results, which we will review, the main advance of \cref{thm:main theorem 1} is that the lower bound on \(n\) is independent of \(g\). 

\begin{thm}\label{thm:main theorem 1}
    \begin{enumerate}
        \item For \(k\ge 2\), \(g\ge 3\), and \( n\ge 2k-2 \), there are infinitely many extremal effective codimension \(k\) cycles on \(\M{g}{n}\), i.e. \(\Eff^k(\M{g}{n})\) has infinitely many extremal rays.
        \item For \(k\ge 2\) and \(g\ge 1\), \(\Eff^k(\M{g}{n})\) is not polyhedral for \( n\ge k+5 \).
    \end{enumerate}
\end{thm}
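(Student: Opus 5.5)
The plan follows the strategy of Chen and Coskun: produce infinitely many extremal cycles of a fixed codimension by pushing forward, along gluing or forgetful maps, extremal classes that already exist in a lower-dimensional moduli space. Chen and Coskun used morphisms, equivalently semiample divisors, to certify extremality. Here the certificate will instead be a nef divisor, namely the semigroup kappa divisors of \cref{sec:cont}, which lets the genus-dependent bounds on \(n\) be removed.

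The basic lemma, to be established in \cref{sec:lemmas}, is as follows. Let \(Z\subset X\) be an irreducible effective cycle and \(D\) a nef divisor with \(D\cdot Z=0\) in the appropriate sense. Then any effective decomposition \(Z=\sum a_i Z_i\) forces every \(Z_i\) to satisfy \(D\cdot Z_i=0\). Hence the \(Z_i\) lie in the null locus of \(D\) and can be compared there. Combined with the injectivity of suitable pushforwards from \cref{sec:inj}, this reduces extremality of \(\iota_*Z\) in \(\M{g}{n}\) to extremality of \(Z\) on a boundary stratum or fiber.

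\textbf{Part (1).} For the seeds, take the known infinitely many extremal effective divisor classes (or curve-type classes) on a small moduli space. The natural choice is \(\M{1}{n'}\) or \(\M{0}{n'}\) when \(n'\) is small, but the genus-independent version should use a genus \(g\ge 3\) locus. Concretely, use the infinitely many extremal codimension-2 cycles obtained from extremal divisors on a stratum like \(\M{1}{2}\times\cdots\), or from the non-polyhedral nef/effective cones of surfaces appearing as boundary strata. Then raise the codimension to \(k\) by gluing rational tails carrying the extra markings; each rational tail with two markings costs one codimension and uses two points. This accounts for \(n\ge 2k-2\).

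For each \(\M{g}{n}\), I would exhibit a nef semigroup kappa divisor whose null locus contains the image stratum. That divisor forces any effective decomposition of the pushed-forward cycle to be supported on the stratum. From there, extremality is inherited from the seed via injectivity of the pushforward on the relevant cycle groups.

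\textbf{Part (2).} Here the goal is non-polyhedrality rather than infinitely many extremal rays. The same machinery suffices once we show that \(\Eff^k\) of some boundary stratum, e.g. one isomorphic to \(\M{1}{m}\times\M{0}{\cdot}\) or \(\M{0}{m}\)-type pieces, is not polyhedral. We then check that, restricted to a face cut out by a nef divisor, the pushforward identifies the face with that cone. Non-polyhedrality follows because a face of a polyhedral cone is polyhedral. For \(g\ge 1\), I would start from the non-polyhedral effective cone of \(\M{1}{n}\) for \(n\ge 3\) (Chen–Coskun) and glue along \(k-1\) rational tails plus the genus-\((g-1)\) part. This should be what produces the bound \(k+5\).

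\textbf{Main obstacle.} The key step is the existence of the nef divisor. For each target configuration we need a nef (not necessarily semiample) divisor whose vanishing face of \(\Eff^k\) is exactly the pushforward of the seed cone. The semigroup kappa divisors must be shown to be nef, and their null locus must be controlled precisely enough that no other effective cycles lie in the face. I would first verify nefness by intersecting with F-curves and using the contraction/kappa description. Controlling the null locus would then use the injectivity results of \cref{sec:inj}.
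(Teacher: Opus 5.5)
\textbf{The gap is in part (2).} The route you sketch cannot produce the bound \(n\ge k+5\), and it misses the two ideas that do.

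\emph{Why the genus-one route fails.} Suppose you start from \(\Eff^1(\M{1}{m})\), \(m\ge 3\), and add rational tails. The non-bigness of \(\kappa+\delta_{0,I}\) on \(\iota(D\times T)\) must then come from the tail \(T\), since the inductive seed \(D\) is not known to be contracted by forgetting the attaching point. So \(T=\M{0}{4}\), and each extra codimension costs two markings. For \(g=1\) this gives at best \(n\ge 2k+1\), which exceeds \(k+5\) once \(k\ge 5\); for larger \(g\) the slope is again two.

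\emph{Where \(k+5\) actually comes from: genus zero.} \(\pEff^1(\M{0}{m})\) is non-polyhedral for \(m\ge 8\) \cite{CLTU23, Mul25}. Since \(\pi_{m,\ast}\pEff^j(\M{0}{m})=\pEff^{j-1}(\M{0}{m-1})\) by \cref{prop:numerical chow facts}(5), induction costs only one marking per codimension. It shows \(\pEff^{k-1}(\M{0}{n+1})\) is non-polyhedral for \(n\ge k+5\). This is transported by \(\iota:\M{0}{n+1}\times\M{g}{1}\to\M{g}{n}\) with \(L=\kappa+\delta_{0,[n]}\), which is nef for every \(g\ge 1\).

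\emph{Bridging pseudo-effective to effective.} Only the pseudo-effective genus-zero cone is known to be non-polyhedral, so you must bridge to \(\Eff\). The steps are:
\begin{enumerate}
\item Assume \(\Eff^k(\M{g}{n})\) is polyhedral, hence closed. Then \(\iota_\ast(D\times[\M{g}{1}])\) is effective for every pseudo-effective \(D\).
\item \cref{lem:contracting subvariety} confines all summands to \(\Delta_{0,[n]}\).
\item \cref{prop:genus0 pushforward injective} pulls the decomposition back to the product.
\item Capping with \(\pi_2^\ast H^{3g-2}\) and pushing to \(\M{0}{n+1}\) shows \(D\) is effective.
\item So \(\Eff^{k-1}(\M{0}{n+1})\) is closed and non-polyhedral, hence has infinitely many extremal rays. \cref{lem:thm11} transports these, a contradiction.
\end{enumerate}

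\emph{Your face argument.} It is a legitimate repackaging of this, but the face is not identified with \(\Eff^{k-1}(\M{0}{n+1})\). The face of \(\Eff^k(\M{g}{n})\) cut out by \(c_1(L)^{d}\cap\alpha=0\) (\(d=\dim\alpha\)) is \(\iota_\ast\) of a face of the product cone. That face also contains classes such as \([\M{0}{n+1}\times W]\). You therefore need two further steps: the retraction \(\beta\mapsto\pi_{1\ast}(\beta\cdot\pi_2^\ast H^{3g-2})\) onto \(\Eff^{k-1}(\M{0}{n+1})\), and the step ``polyhedral \(\Rightarrow\) closed \(\Rightarrow\) equal to \(\pEff^{k-1}\)''.

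\textbf{Part (1).} The architecture matches the paper, but the specifics you leave open are what make it work.
\begin{enumerate}
\item \emph{Base case.} It is \(\Delta_{1,[n]}\cong\M{1}{n+1}\times\M{g-1}{1}\) with \(Z_D=D\times\M{g-1}{1}\), where \(D\) is an extremal divisor on \(\M{1}{n+1}\). This needs \(n+1\ge 3\); \(\Eff^1(\M{1}{2})\) has only two rays.
\item \emph{Where \(g\ge 3\) comes from.} It is exactly the condition \(2i<g\) for \(I=[n]\) required in \cref{prop:semigroup Kappa} and \cref{prop:N1 pushforward injective}. Non-bigness comes from forgetting the node on the \(\M{g-1}{1}\) factor. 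The seed must be a divisor, not a ``curve-type class'', since injectivity is only proved on \(\nc^1\).
\item \emph{Inductive step.} It uses \(\M{g}{n-2}\times\M{0}{4}\to\M{g}{n}\), i.e.\ a tail carrying three markings. A two-marking tail is the point \(\M{0}{3}\) and gives no contraction.
\item \emph{Nefness.} Nefness of \(\kappa+\delta_{i,I}\) does not come from F-curve checks, which would need the F-conjecture. It comes from ampleness of \(\kappa\) together with \((\kappa+\delta_{i,I})|_{\Delta_{i,I}}=\pi_{1,s}^\ast\kappa+\pi_{2,s}^\ast\kappa\).
\item \emph{Null locus.} It is controlled by \cref{thm:big}(1), not by injectivity.
\item \emph{Your basic lemma.} The hypothesis should be that \(L|_Z\) is not big, i.e.\ \(c_1(L)^{\dim Z}\cap[Z]=0\), rather than \(D\cdot Z=0\).
\end{enumerate}
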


Second, extending \cite{Bla22}, we obtain new examples of extremal boundary strata.

\begin{thm}\label{thm:main theorem 2}
    Any rational tails boundary stratum is extremal.
\end{thm}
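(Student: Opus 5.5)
The plan is to argue by induction on the codimension of the stratum, with the Chen--Coskun type extremality lemma of \cref{sec:lemmas} as the engine. A rational tails stratum $S_\Gamma\subset\M{g}{n}$ corresponds to a stable graph $\Gamma$. In $\Gamma$ there is a unique vertex $v_0$ of genus $g$, and every other vertex has genus $0$, forming trees attached to $v_0$. We have $S_\Gamma \cong \M{g}{n_0}\times\prod_{v\ne v_0}\M{0}{n_v}$, up to finite quotient.

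The basic mechanism is as follows. Suppose $D$ is a nef divisor on $\M{g}{n}$, and $Z=\sum a_iZ_i$ is an effective cycle with $[Z]=[S_\Gamma]$. Suppose also that $D$ restricts to $S_\Gamma$ with a controlled null locus, for instance $D|_{S_\Gamma}$ pulled back from a contraction $S_\Gamma\to S'$ whose fibers are the positive-dimensional factors we wish to kill. Then intersecting with suitable powers $D^m$ and ample classes forces each $Z_i$ to lie in the locus where $D$ fails to be positive. The lemma from \cref{sec:lemmas}, allowing nef rather than merely semiample $D$, packages this. It reduces extremality of $S_\Gamma$ to two conditions:

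(i) extremality of the image class in the target of a map (or on a smaller locus);

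(ii) injectivity of the relevant pushforward on codimension-$k$ classes, which \cref{sec:inj} supplies.

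Concretely, I will proceed in the following steps.

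\begin{enumerate}
\item First handle the genus-$0$ tails. I will use the forgetful and tautological contractions, e.g.\ the morphisms forgetting points or the $\kappa$-type semigroup divisors of \cref{sec:cont}. These are nef and, on rational tails loci, contract exactly the rational-tail directions. They show that any effective representative must be supported in $\pi^{-1}$ of the locus where the tails collapse, i.e.\ in the union of the boundary divisors $\Delta_{0,T}$ defining $\Gamma$.
\item Next peel off one edge at a time. If $Z$ is supported in $\Delta_{0,T}\cong \M{g}{n-|T|+1}\times\M{0}{|T|+1}$, use injectivity of the pushforward from $\Delta_{0,T}$ (\cref{sec:inj}) to rewrite the equation on $\Delta_{0,T}$. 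The stratum there is a product of rational tails strata of smaller graphs, so induction applies, together with the known extremality of boundary strata in $\M{0}{m}$ and the product lemma for extremality.
\item Finally, the base case is the whole space (trivially extremal) or a single divisor $\Delta_{0,T}$. For the latter, extremality follows since it is contracted by a morphism, namely the one forgetting or stabilizing the points in $T$, which is a standard semiample argument.
\end{enumerate}

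I expect the main obstacle to be the second step: the pushforward from a boundary divisor need not be injective on Chow groups in general. I will try to pass to $\HH^{2k}$ or to the tautological ring, where the Künneth decomposition for $\M{g}{a}\times\M{0}{b}$ is available, and verify injectivity on the specific classes appearing. A second subtle point is ensuring the nef divisor kills exactly the tail directions and nothing on the genus-$g$ vertex. This is where the semigroup kappa divisors of \cref{sec:cont} should be chosen to be pulled back from $\M{g}{n_0}$ via the map collapsing tails, so that the argument requires no genus-dependent bound on $n$.
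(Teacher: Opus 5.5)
There is a genuine gap, and it sits in the case that goes beyond Blankers' results: strata with trivalent genus-$0$ vertices. Your design principle is a nef divisor that, on $\Delta_\Gamma$, is pulled back from the genus-$g$ vertex space $\M{g}{n_0}$ by collapsing the tails, so that it ``kills exactly the tail directions and nothing on the genus-$g$ vertex.''

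If every genus-$0$ vertex of $\Gamma$ is trivalent, the tail factors are points. Then $\Delta_\Gamma$ is finite over $\M{g}{n_0}$, and any divisor pulled back from a big class there is big on $\Delta_\Gamma$. So \cref{lem:contracting subvariety} gives no confinement at all. A concrete example is $\Delta_{0,\{1,2\}}\cap\Delta_{0,\{1,2,3\}}\subset\M{g}{n}$: a chain consisting of the genus-$g$ component, then a trivalent rational component carrying $3$, then a trivalent rational component carrying $1,2$. There is no tail direction to kill.

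The paper instead uses $L=\kappa+\delta_{0,I}$, for which $\iota^\ast L=\pi_{1,s}^\ast\kappa+\pi_{2,s}^\ast\kappa$ by \cref{eqn:Kappa}. Here $\pi_{2,s}$ forgets the attaching point on the \emph{genus-$g$ side}. So for a rigid tail, non-bigness comes from moving the node on the other side, which is exactly what your principle rules out. Even this works only for a well-chosen edge. In the example with $I=\{1,2\}$, $\pi_{2,s}$ does not drop the dimension of the relevant stratum, so $L$ is big on $\Delta_\Gamma$. With $I=\{1,2,3\}$, the far endpoint is the genus-$g$ vertex and $L|_{\Delta_\Gamma}$ is not big.

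Your Step 2 has two further problems, and the same missing choice resolves both.

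\begin{enumerate}
\item Confining the summands to a \emph{union} of divisors $\Delta_{0,T}$ is not enough for \cref{lem:main strategy}. You need them inside the image of a single clutching map with injective pushforward. The disjoint union of several such divisors has non-injective pushforward, since the class of $\Delta_\Gamma$ lifts to each of them.
\item The available lemmas give extremality only for $Z\times Y$ and $Z\times\mathrm{pt}$ (\cref{lem:extremal product everything}, \cref{lem:extremal product point}). They do not cover a product of a positive-dimensional proper $\M{0}{m}$-stratum with a genus-$g$ stratum. So you cannot cut off an arbitrary genus-$0$ subtree.
\end{enumerate}

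The missing ingredient is \cref{lem:combinatorics}. Every rational tails graph has one of the following:
\begin{enumerate}
\item a non-trivalent genus-$0$ leaf, giving $\Delta_\Gamma=\iota(\M{0}{l+1}\times\Delta_{\Gamma'})$ with $\pi_{1,s}$ dropping dimension; or
\item an edge cutting off an all-trivalent genus-$0$ subtree (a point) whose other endpoint is non-trivalent or of genus $g$, giving $\Delta_\Gamma=\iota(\mathrm{pt}\times\Delta_{\Gamma_2})$ with $\pi_{2,s}$ dropping dimension.
\end{enumerate}
This single choice secures both the extremality of the source cycle and condition (3) of \cref{lem:thm11}.

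Your injectivity worry, by contrast, is moot. \cref{prop:genus0 pushforward injective} gives injectivity on $\nc^d$ for every $d$ whenever $|I|\ge 2$, with no need to pass to cohomology or the tautological ring.
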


The common new ingredient in the proofs of \cref{thm:main theorem 1} and \cref{thm:main theorem 2} is \cref{lem:contracting subvariety}, which allows us to prove the extremality of the pushforward of an extremal effective cycle. It extends the argument of Chen and Coskun \cite[Proposition 2.1]{CC15}, which uses morphisms, or equivalently, their associated semiample divisors, to a numerical argument using the exceptional locus of nef divisors. The nef divisors needed for these arguments are supplied by the semigroup kappa divisors in \cref{prop:semigroup Kappa} (see also \cite[Section 7]{Cho25b}). Since the particular nef divisors used in the proofs are not semiample (cf. \cref{rmk:semiample}), \cref{lem:contracting subvariety} is essential.

As some of these are used in the proofs, we now briefly catalog other results of these two types: the polyhedrality of \(\Eff^k(\M{g}{n})\) and \(\pEff^k(\M{g}{n})\), and the extremality of boundary strata. 

The following cones of cycles on \(\M{g}{n}\) are known to have infinitely many extremal rays:

\begin{itemize}
    \item \( \Eff^1(\M{1}{n}) \) for \(n\ge 3\) \cite{CC14}.
    \item \( \Eff^2(\M{1}{n}) \) for \(n\ge 5\) and \( \Eff^2(\M{2}{n}) \) for \(n\ge 2\) \cite{CC15}.
    \item \( \Eff^1(\M{g}{n}) \) for \(g\ge 2\), \(n\ge g+1\) \cite{Mul17}.
    \item \( \Eff^k(\M{1}{n}) \) for \(n\ge k+2\), \( \Eff^k(\M{g}{n}) \) for \(g\ge 2\), \(k\le g\), and \(n\ge g+k\), and \( \Eff^2(\M{g}{n}) \) for \(n\ge g+1\) and \(g\ge 3\) \cite{Mul20}.
\end{itemize}

The following pseudo-effective cones are known not to be polyhedral:

\begin{itemize}
    \item \( \pEff^1(\M{g}{n}) \) for \(g,n\ge 2\) \cite{Mul21}.
    \item \( \pEff^1(\M{0}{n}) \) for \( n\ge 8 \) \cite{Mul25}, improving the result of \cite{CLTU23}, which holds for \(n\ge 10\).
\end{itemize}

Given these results, we can see the improvement of \cref{thm:main theorem 1} is that the lower bound on \(n\) no longer depends on \(g\), but only on \(k\). 

To clarify, we next explain three kinds of statements, all of which appear in the main theorems here and in previous work regarding non-polyhedrality of effective cones. Let \(C\) be a full-dimensional cone in a Euclidean space, possibly non-closed. In our case, this will be \(\Eff^k(X)\) for some projective variety \(X\). Here are three statements:

\begin{enumerate}
    \item \(C\) has infinitely many extremal rays.
    \item The closure \( \overline{C} \) of \(C\) is not polyhedral.
    \item \(C\) is not polyhedral.
\end{enumerate}

Each of (1) and (2) implies (3), but the other implications do not hold in general. For example, the interior of a circular (resp. polyhedral) cone satisfies (2) but not (1) (resp. (3) but not (1) or (2)). If we take the union of the interior of a cube and a closed disk on one of the six faces of the cube, then this set is convex; if we let \(C\) be the \(4\)-dimensional cone over it, then this gives a counterexample to \( (1)\Rightarrow (2) \). Although the author is unaware of whether such a counterexample can occur for \(C=\Eff^k(X)\), we attempt here to be precise and distinguish these three statements.

We now summarize the known results on extremal boundary strata of $\M{g}{n}$.

\begin{itemize}
    \item Boundary divisors: See \cite{Rul01, CC15, Che18, Bla22}.
    \item  Codimension \(2\) boundary strata of \(\M{0}{n}\) and \(\Mg{g}\), and certain higher-codimension boundary strata \cite[Theorem 5.6, Theorem 6.2]{CC15}.
    \item All boundary strata of \( \M{0}{n} \) \cite{Sch15, Bla22}.
    \item Certain \textit{rational tails boundary strata} (cf. \cref{defn:rational tail}); more specifically, such strata without trivalent vertices of genus \(0\) and \textit{pinwheel strata} \cite[Proposition 4.10]{Bla22}.
    \item Certain 1-dimensional boundary strata \cite[Theorem 1.3, Theorem 1.4]{Cho25b}.
\end{itemize}

In \cref{thm:main theorem 2}, we extend the work of \cite{Bla22} by showing that any rational tails boundary stratum is extremal.

We next describe our contributions, and the method used to extend the work of Chen and Coskun \cite[Proposition 2.1]{CC15}. Following the strategy of \cite{CC15, Bla22}, we prove the extremality of the pushforward of certain extremal effective cycles. Therefore, the first step is to identify suitable extremal effective cycles to start with. For this step, we do not construct such cycles ourselves; instead, we rely on cycles constructed in previous work, all of which were listed above. In this sense, the paper gives a general procedure for producing new extremal effective cycles from previously known ones. For example, if we know that there are infinitely many extremal effective divisors on \(\M{0}{n}\) for sufficiently large \(n\), then we can strengthen part (2) of \cref{thm:main theorem 1} to a stronger statement asserting the infinitude of extremal effective cycles.

In particular, we take pushforwards of certain extremal effective cycles along \(\iota:\overline{\rm{M}}\to \M{g}{n}\), where \(\overline{\rm{M}}\) is a certain product of moduli spaces of curves and \(\iota\) is the corresponding clutching map. Using \cref{lem:extremal product everything} and \cref{lem:extremal product point}, we construct extremal effective cycles on \(\overline{\rm{M}}\) from those in the previous paragraph. To prove the extremality of the pushforward \(\iota_\ast : \nc^k(\overline{\rm{M}})\to \nc^{k+d}(\M{g}{n})\) of such cycles, we use \cref{lem:main strategy}, which already appears in \cite{CC15, Bla22}. 

To apply \cref{lem:main strategy}, we need two ingredients: (1) injectivity of the pushforward, and (2) a confinement condition on the summands appearing in the pushforward. More precisely, the second condition says that if \(Z\) is one of the extremal effective cycles on \(\overline{\rm{M}}\) mentioned above, and if \(\iota_\ast [Z]=\sum a_j[Z_j]\) with \(a_j>0\), then \(Z_j=\iota(Z'_j)\) for some subvariety \(Z'_j\subseteq \overline{\rm{M}}\) for each \(j\). In other words, all summands of \(\iota_\ast [Z]\) are confined to the image of \(\iota\). To establish this, \cite{CC15, Bla22} use morphisms, such as Hassett's weighted moduli spaces of curves \cite{Has03} and the moduli space of pseudostable curves \cite{HH09}, together with \cite[Proposition 2.1]{CC15}.

Here, we prove \cref{lem:contracting subvariety}, which strengthens \cite[Proposition 2.1]{CC15}. The argument in \cite[Proposition 2.1]{CC15} requires a morphism, hence applies only when one has a semiample divisor. In \cref{lem:contracting subvariety}, we extend this to a setting that also allows the use of nef divisors. The key advantage is that proving nefness is often much easier than proving semiampleness, and some of the nef divisors used in this paper are in fact not semiample (cf. \cref{rmk:semiample}). Thus, \cref{lem:contracting subvariety} is a crucial improvement that makes the arguments of this paper possible.

The nef divisors we use are semigroup kappa divisors, introduced in \cite[Section 7]{Cho25b}. They are easy to construct, and the structure of semigroup kappa divisors makes it easier to keep track of their behavior on the boundary. For the main theorems, we only need a very special class of semigroup kappa divisors, described in \cref{prop:semigroup Kappa}, but other semigroup kappa divisors are also useful for extremality problems, as we will see in \cref{prop:codim 2}. This suggests that semigroup kappa divisors may provide a useful source of nef divisors for studying extremality problems beyond the cases considered in this paper.

The final step is the injectivity of certain pushforwards, established in \cite[Proposition 4.2]{Bla22} and \cref{prop:N1 pushforward injective}. Unlike similar results in \cite{CC15, Bla22}, we convert the injectivity statement into a surjectivity statement on the dual groups, and then prove surjectivity using the combinatorics of dual graphs, which will be reviewed in \cref{sec:prelim}. This part is the bottleneck of the current method: if we can improve \cref{prop:N1 pushforward injective}, then we can further improve \cref{thm:main theorem 1} (cf. \cref{rmk:inj} (2)). However, as the codimension grows, the combinatorics of dual graphs become more complicated and further obstructions arise (cf. \cref{rmk:inj} (3)). 

The paper is organized as follows. In \cref{sec:prelim}, we review the basics of dual graphs and intersection theory. In \cref{sec:lemmas}, we present the lemmas used to transfer extremality, following \cite{CC15, Bla22}. In \cref{sec:cont}, we introduce a simple class of semigroup kappa divisors, which will be used to prove extremality. In \cref{sec:inj}, we prove and discuss the injectivity of pushforward maps between numerical Chow groups. Finally, in \cref{sec:ext}, we prove the main theorems.

\section*{Acknowledgments}

The author would like to thank Angela Gibney for her helpful discussions and continued support. We are grateful to Jenia Tevelev for answering our questions, and to Dawei Chen for helpful comments on an earlier version of this paper. The author was partially supported by the Korea Foundation for Advanced Studies Overseas PhD Scholarship Program and Simons Dissertation Fellowship SFI-MPS-SDF-00014683.

%% file: sections/prelim.tex
\subsection{Notations and conventions}\label{subsec:notation}

Throughout, the Picard group $\rm{Pic}(X)$, the cone of nef divisors $\rm{Nef}(X)$, numerical Chow groups $\nc_d(X), \nc^d(X)$, curve classes $[C]$, and divisors/line bundles will be considered over $\Q$. Thus, unless otherwise stated, these terms refer to their $\Q$-coefficient versions, such as the $\Q$-Picard group, $\Q$-divisors, and so on. \(\Eff^d(X)\) is the subcone of \(\nc^d(X)_\R\) generated by effective cycle classes, and \(\pEff^d(X)\) is the closure of \(\Eff^d(X)\). When we say that a codimension \(d\) closed subvariety \(Z\subseteq X\), or its class \([Z]\), is extremal, we mean that \([Z]\) spans an extremal ray of \(\Eff^d(X)\). We include integrality as part of the definition of a variety.

We denote by $[n]$ the set $\{1, 2, \dotsc, n\}$, and by $\pi_S$ the projection map $\M{g}{n} \to \M{g}{S^c}$ that forgets the marked points indexed by $S \subseteq [n]$. We will denote a stable curve with \(n\) marked points by \((C, p_1,\cdots, p_n)\), but for simplicity we will sometimes write \((C, p_\bullet)\), and we will denote the \(i\)th marked point by \(i\) instead of \(p_i\).

\subsection{Dual graphs and boundary strata}
In this subsection, we recall some basic facts about dual graphs, since some of the proofs in this paper involve the combinatorics of dual graphs. We refer to \cite{Sch20} for a detailed explanation.

\begin{defn}\label{defn:dual graph}
    A \textbf{stable graph} is a tuple \( \Gamma=(V, H, E, L, g,v,\iota, l, n) \) such that 
    \begin{itemize}
        \item \(V=V(\Gamma)\), the finite set of \textbf{vertices}, and \(g:V\to \mathbb{Z}_{\ge 0}\) associates a \textbf{genus} to a vertex.
        \item \( H=H(\Gamma) \) is a set of \textbf{half-edges}, with a function \(v:H\to V\) indicating the vertex to which a half-edge is attached, and an involution \(\iota:H\to H\).
        \item  \(L=L(\Gamma)\) is a set of \textbf{legs}, the subset of fixed points of \(\iota\) with a bijection \(l:L(\Gamma)\to [n] \).
        \item \(E=E(\Gamma)\), the set of \textbf{edges}, is the quotient of \(H\setminus L\) by \(\iota\). 
    \end{itemize}
    which satisfies the following conditions:
    \begin{enumerate}
        \item The underlying graph \((V, E)\) is connected.
        \item If \(g(v)=0\) (resp. \(g(v)=1\)), then there are at least \(3\) (resp. \(1\)) half-edges incident to \(v\).
    \end{enumerate}
\end{defn}

For any dual graph \(\Gamma\), the associated graph \((V,E)\) has a natural structure of a \(1\)-dimensional CW complex. We denote its first Betti number by \(h^1(V(\Gamma), E(\Gamma))\). The \textbf{genus} of a dual graph \(\Gamma\) is defined by 
\[ g(\Gamma)=\sum_{v\in V(\Gamma)}g(v)+h^1(V(\Gamma), E(\Gamma)). \]
Moreover, the \textbf{number of marked points} of \(\Gamma\) is \(n(\Gamma)=|L|=n\).

Dual graphs parametrize boundary strata of \(\M{g}{n}\). For a stable curve \((C, p_\bullet)\), the corresponding dual graph \(\Gamma_C\) is obtained in the following way:
\begin{itemize}
    \item \( V(\Gamma_C) \) is the set of irreducible components of \(C\), and \(g\) is the genus of each component.
    \item \(L(\Gamma_C)\) is the set of marked points of \(C\), and \(l\) maps \(p_i\) to \(i\).
    \item \(E(\Gamma_C)\) is the set of nodes of \(C\) connecting the vertices corresponding to the irreducible components containing the node.
    \item \( H(\Gamma_C) \) is the disjoint union of two copies of \(E(\Gamma_C)\) and \(L(\Gamma_C)\). The map \(v\) assigns to each marking in \(L(\Gamma_C)\) the component containing it, and assigns to each node in \(E(\Gamma_C)\) the two possibly identical components containing that node.
\end{itemize}
It is straightforward to verify that \(g(\Gamma_C)=g\) and \(n(\Gamma_C)=n\). For any dual graph \(\Gamma\) with \(g=g(\Gamma)\) and \(n=n(\Gamma)\), let \(\rm{M}_{\Gamma}\) be the locus in \(\M{g}{n}\) parametrizing curves \((C, p_\bullet)\) with \(\Gamma_C\simeq\Gamma\), and let \(\Delta_{\Gamma}\) be its closure in \(\M{g}{n}\). Then \(\Delta_{\Gamma}\) is a boundary stratum of \(\M{g}{n}\). This defines a one-to-one correspondence between dual graphs and boundary strata. The codimension of \(\Delta_{\Gamma}\) is equal to the number of edges of \(\Gamma\).

The \(1\)-dimensional boundary strata are especially interesting for various reasons. Such strata are called \textbf{F-curves}. For a detailed treatment of F-curves, we refer to \cite[Section 2]{GKM02}. F-curves are classified into six types, and following the notation of \cite{Cho25b}, we will denote these six types of F-curves by \(F_1\), \(F_2\), \(F_3^{g_1}(I_1)\), \(F_4^{g_1}(I_1)\), \(F_5^{g_1,g_2}(I_1,I_2)\), and \(F_6^{g_1,g_2,g_3,g_4}(I_1,I_2,I_3,I_4)\). The corresponding dual graphs give explicit representatives of these six types; we refer to \cite[Section 2]{Cho25b} for the full definition.

\subsection{Intersection Theory and Positivity}

Here, we review some basics of intersection theory on varieties and the positivity of cycles. Let \(X\) be a proper variety, and let \(A_k(X)\) (resp. \(\nc_k(X)\)) denote the Chow group of \(k\)-cycles modulo rational (resp. numerical) equivalence, with \(\Q\)-coefficients (for details, see, e.g. \cite[Chapter 19]{Ful98}). In \(\nc_k(X)_{\R}\), let \(\Eff_k(X)\), the \textbf{cone of effective \(k\)-cycles}, be the cone generated by effective \(k\)-cycles, and let \(\pEff_k(X)\), the \textbf{cone of pseudo-effective \(k\)-cycles}, be its closure. For a detailed discussion, we refer to \cite[Section 2]{FL17}. The following proposition summarizes basic properties of \(\nc_k(X)\) and these cones.

\begin{prop}\label{prop:numerical chow facts}
Let \(X\) and \(Y\) be projective varieties and \(f:X\to Y\) be a surjective morphism.

\begin{enumerate}
    \item For every \(X\), the vector space \(\nc_k(X)\) is finite-dimensional.

    \item If \(L\) is a line bundle on \(X\) and \( \alpha\in \nc_k(X) \), then \(c_1(L)\cap \alpha\) is well-defined in \(\nc_{k-1}(X)\).

    \item If \(L\) is semiample and \( \alpha=[V]\in \Eff_k(X) \) is an effective cycle, then \(c_1(L)\cap \alpha\in \Eff_{k-1}(X)\). If \(L\) is nef and \( \alpha\in \pEff_k(X) \), then \(c_1(L)\cap \alpha\in \pEff_{k-1}(X)\). In general, for \(l\le k\), we have \(c_1(L)^l\cap \alpha\in \pEff_{k-l}(X)\) for any nef line bundle \(L\).

    \item \(\pEff_k(X)\) is a salient cone, i.e. \( \pEff_k(X)\cap (-\pEff_k(X))=\{0\} \).

    \item \(f_\ast(\Eff_k(X))=\Eff_k(Y) \) and \(f_\ast(\pEff_k(X))=\pEff_k(Y) \).
\end{enumerate}
\end{prop}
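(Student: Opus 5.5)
The proposition collects standard facts, and I would prove it item by item, citing \cite[Chapter 19]{Ful98} and \cite[Section 2]{FL17} wherever possible.

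For (1), I would use that numerical equivalence is coarser than homological equivalence. Over \(\mathbb{C}\) the cycle class map sends \(A_k(X)\) to \(\HH_{2k}(X,\Q)\); in general one uses \(\ell\)-adic (Borel--Moore) homology. Cycles with zero homology class pair to zero with every product of Chern classes. Hence \(\nc_k(X)\) is a quotient of the image of \(A_k(X)\) in a finite-dimensional vector space, and is therefore finite-dimensional.

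For (2), suppose \(\alpha\equiv 0\) numerically. For any weight \(P\) (a polynomial in Chern classes) of the right degree, \(\deg(P\cap(c_1(L)\cap\alpha))=\deg((P\cdot c_1(L))\cap\alpha)=0\). So \(c_1(L)\cap-\) descends to \(\nc_k(X)\to\nc_{k-1}(X)\).

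For (3), first take \(L\) semiample. Replace \(L\) by a basepoint-free multiple \(mL\), which is harmless over \(\Q\). Pick a general section \(s\) not vanishing identically on \(V\). Then \(c_1(mL)\cap[V]=[\mathrm{div}(s|_V)]\) is an effective cycle. Now take \(L\) nef and fix an ample \(A\). Each \(L+\epsilon A\) is an ample \(\Q\)-divisor, hence semiample, so \(c_1(L+\epsilon A)\cap-\) maps \(\Eff_k\) into \(\Eff_{k-1}\). By linearity and continuity it maps \(\pEff_k\) into \(\pEff_{k-1}\). Letting \(\epsilon\to 0\) and using closedness gives the claim for \(L\). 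The statement for \(c_1(L)^l\) follows by iterating.

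Item (5) splits into three parts.
\begin{itemize}
\item \emph{Inclusion.} For a subvariety \(V\), \(f_\ast[V]\) is either \(0\) or \(\deg(V/f(V))\,[f(V)]\), so \(f_\ast(\Eff_k(X))\subseteq\Eff_k(Y)\).
\item \emph{Surjectivity on \(\Eff\).} Let \(W\subseteq Y\) be a \(k\)-dimensional subvariety. Choose an irreducible component of \(f^{-1}(W)\) dominating \(W\); it exists because \(f\) is surjective. Cut it by general ample hypersurfaces down to a \(k\)-dimensional subvariety \(V\) that still dominates \(W\). Then \(V\to W\) is generically finite, and \(f_\ast[V]=d[W]\) with \(d>0\).
\item \emph{Closures.} Continuity of \(f_\ast\) gives \(f_\ast(\pEff_k(X))\subseteq\pEff_k(Y)\). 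For the reverse inclusion, it suffices that \(f_\ast(\pEff_k(X))\) is closed, since it contains \(\Eff_k(Y)\).
\end{itemize}

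For (4), fix an ample \(H\). Every effective \(V\) satisfies \(H^k\cdot[V]=\deg_H V>0\). I would then show that the slice \(\{\alpha\in\pEff_k(X): H^k\cdot\alpha=1\}\) is compact. The approach is to bound the coordinates of effective classes of \(H\)-degree \(1\) against a fixed basis of \(\nc^k\). I would use boundedness of families of cycles of bounded degree (Chow varieties), or equivalently the fact that every class in \(\nc^k\) is dominated by a multiple of a complete intersection of ample divisors. Given compactness, a nonzero class in \(\pEff_k\cap(-\pEff_k)\) would have \(H^k\cdot\alpha\ge 0\) and \(\le 0\), hence \(=0\). Compactness of the slice then forces \(\alpha=0\).

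The main obstacles are this strict-positivity/compactness statement in (4), and the closedness of \(f_\ast(\pEff_k(X))\) in (5). A compact base of \(\pEff_k(X)\) alone does not suffice for the latter, since \(f_\ast\) may kill pseudo-effective classes. Here I would follow \cite[Section 2]{FL17} (and \cite{FL16}) rather than reprove them. Concretely, I would invoke their result that \(\pEff_k(Y)\) is the closure of the pushforward from any surjection, obtained via the pullback and Gysin-type constructions there.
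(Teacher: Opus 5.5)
Your plan is essentially the paper's: (1), (2), (4) and the \(\pEff\) half of (5) are citations to \cite{Ful98} and \cite{FL17} (Cor.~3.8 and 3.22 there), and (3) is the effective-divisor argument for semiample \(L\) followed by a limit, where your \(L+\epsilon A\) approximation makes explicit what the paper calls ``taking limits''. For the \(\Eff\) half of (5), cutting a dominating component of \(f^{-1}(W)\) by hypersurfaces is an equivalent substitute for the paper's closure of a closed point of the generic fiber over \(W\).

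Two small corrections. In (4), boundedness of Chow varieties only gives finitely many classes in each degree, which does not bound the normalized classes. It is the domination of every dual class by a multiple of \(H^k\) that gives the compact slice. In (5), the result to cite from \cite{FL17} is the equality \(f_\ast\pEff_k(X)=\pEff_k(Y)\), i.e.\ closedness of \(f_\ast\pEff_k(X)\). It is not that \(\pEff_k(Y)\) is the closure of the pushforward, which is automatic from the \(\Eff\) statement.
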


\begin{proof}
    (1) This follows from \cite[Example 19.1.4]{Ful98}.

    (2) This is the Chern class operation; see \cite[Chapters 2,19]{Ful98} and \cite[Remark 2.4]{FL17}.

    (3) The second assertion follows from the first one by taking limits, since \(\pEff_{k-1}(X)\) is the closure of \(\Eff_{k-1}(X)\). The first assertion follows from \cite[Proposition 2.5(c)]{Ful98} applied to the closed embedding \(\iota:V\to X\), since \(c_1(L|_V)\) defines an effective \(\Q\)-Cartier divisor for any closed subvariety \(V\subseteq X\). The last assertion follows by repeated application of the second assertion.

    (4) This is \cite[Corollary 3.8]{FL17}.

    (5) The second assertion is \cite[Corollary 3.22]{FL17}. The first assertion follows from a standard argument: let \(Z\subseteq Y\) be a \(k\)-dimensional closed subvariety. It is enough to show that there exists a closed subvariety \(Z'\subseteq X\) of the same dimension such that \(f(Z')=Z\). Let \(\eta\) be the generic point of \(Z\), and consider the fiber \(X_{\eta}\) of \(f\). Take any closed point of \(X_\eta\). Its closure \(Z'\) in \(X\) is a desired subvariety.
\end{proof}

A line bundle \(L\) on a projective variety is \textbf{big} if \(\dim \HH^0(X, L^m)\) grows on the order of \(m^{\dim X}\) as \(m\to \infty\). For any line bundle \(L\) on \(X\), the \textbf{exceptional locus} of \(L\) is defined as 
\[ \mathbb{E}(L):=\bigcup_{\,V \subset X,\ \dim V>0,\ L|_V\ \text{not big}} V. \]

\begin{thm}\label{thm:big}
    Let \(L\) be a line bundle on \(X\).
    \begin{enumerate}
        \item If \(L \cong A+E\) with \(A\) an ample and \(E\) an effective \(\Q\)-Cartier divisor, then
        \[ \mathbb{E}(L)\subseteq \text{Supp}(E). \]
    
        \item If \(L\) is a nef line bundle, then \(L\) is big if and only if \(c_1(L)^{\dim X}\cap [X]\) is positive.
        
        \item If \(L\) is a nef line bundle, then \(\mathbb{E}(L)\) is a closed subvariety of \(X\). 

    \end{enumerate}
    
\end{thm}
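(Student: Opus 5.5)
All three parts are standard facts about line bundles, so the plan is to prove parts (1) and (2) directly from basic positivity theory and to cite Nakamaye's theorem for part (3).

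\emph{Part (1).} I would argue directly from the definition of \(\mathbb{E}(L)\).
\begin{enumerate}
    \item Let \(V\subseteq X\) be a positive-dimensional closed subvariety with \(V\not\subseteq \text{Supp}(E)\). It suffices to show that \(L|_V\) is big.
    \item Since \(V\) is integral and not contained in the support of the effective \(\Q\)-Cartier divisor \(E\), the restriction \(E|_V\) is a well-defined effective \(\Q\)-Cartier divisor on \(V\).
    \item Then \(L|_V\cong A|_V+E|_V\), with \(A|_V\) ample.
    \item Choose \(m\) so that \(mA|_V\) and \(mE|_V\) are integral. Multiplication by a section cutting out \(mE|_V\) gives an injection \(\HH^0(V,kmA|_V)\hookrightarrow \HH^0(V,kmL|_V)\).
    \item The left side grows like \(k^{\dim V}\) because \(A|_V\) is ample, so \(L|_V\) is big.
\end{enumerate}
Hence every \(V\) on which \(L\) fails to be big lies in \(\text{Supp}(E)\), which gives \(\mathbb{E}(L)\subseteq \text{Supp}(E)\).

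\emph{Part (2).} I would use asymptotic Riemann--Roch for nef bundles. Set \(n=\dim X\).
\begin{enumerate}
    \item For a nef line bundle, Fujita's vanishing-type estimate (see Lazarsfeld, \emph{Positivity I}, Theorem 1.4.40) gives \(h^i(X,mL)=O(m^{n-i})\) for \(i>0\).
    \item Asymptotic Riemann--Roch then yields \(h^0(X,mL)=\frac{(L^n)}{n!}m^n+O(m^{n-1})\).
    \item Therefore \(h^0(X,mL)\) grows like \(m^n\) exactly when \(c_1(L)^n\cap[X]>0\).
\end{enumerate}

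\emph{Part (3).} This is the main obstacle, since it is a genuinely deep statement rather than a formal consequence of the definitions. My plan is to identify \(\mathbb{E}(L)\) with the augmented base locus
\[ \mathbf{B}_+(L)=\bigcap_{A\ \text{ample},\ \epsilon>0}\mathbf{B}(L-\epsilon A), \]
which is Zariski closed because it equals the stable base locus \(\mathbf{B}(L-\epsilon A)\) for a fixed ample \(A\) and all sufficiently small \(\epsilon>0\).
\begin{itemize}
    \item \textbf{Characteristic \(0\), smooth \(X\):} the equality \(\mathbf{B}_+(L)=\mathbb{E}(L)\) for nef \(L\) is Nakamaye's theorem.
    \item \textbf{Singular or reducible \(X\):} the equality is due to Cascini--McKernan--Musta\c{t}\u{a} in positive characteristic and to Birkar in characteristic \(0\). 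This covers the moduli spaces and products of moduli spaces used in this paper.
\end{itemize}
If \(L\) is not big, then \(\mathbb{E}(L)=X\) and the claim is trivial. Note that "closed subvariety" in the statement should be read as "Zariski-closed subset", since \(\mathbb{E}(L)\) may be reducible.
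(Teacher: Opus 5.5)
Your proposal is correct and follows the paper's proof. For (1), the paper restricts to \(V\not\subseteq\text{Supp}(E)\) exactly as you do and cites Lazarsfeld's Corollary 2.2.7 (ample plus effective is big), which your section-multiplication injection proves directly. For (2), it cites Lazarsfeld's Theorem 2.2.16, whose proof is your asymptotic Riemann--Roch argument, and for (3) it cites Birkar's theorem \(\mathbf{B}_+(L)=\mathbb{E}(L)\) for nef \(L\).

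Your remark that ``closed subvariety'' should be read as ``closed subset'' is apt given the paper's integrality convention. Birkar's result already holds over fields of arbitrary characteristic, so your case split by characteristic is unnecessary.
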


\begin{proof}
    (1) Let \(V\subseteq X\) be a positive-dimensional closed subvariety such that \(L|_V\) is not big. If \(V\) is not contained in \(E\), then \(E|_V\) is again an effective divisor on \(V\), and \(A|_V\) is ample. Therefore, by \cite[Corollary 2.2.7]{La04a}, \(L|_V=A|_V+E|_V\) is big, a contradiction.

    (2) is \cite[Theorem 2.2.16]{La04a}, and (3) Follows from \cite[Theorem 1.4]{Bir17}.
\end{proof}

%% file: sections/lemmas.tex
In this section, we prove some lemmas regarding the extremality of effective cycles. Some of these lemmas are taken from \cite[Section 2]{CC15} and \cite[Section 2]{Bla22}. We recall the needed statements for the reader’s convenience. The following lemma will be our basic tool for generating extremal effective cycles. We say that \([Z]\in \Eff_k(X)\) is an \textbf{extremal effective cycle} if it spans an extremal ray of \(\Eff_k(X)\).

\begin{lem}(\cite[Proposition 2.5]{CC15}, \cite[Lemma 2.7]{Bla22})\label{lem:main strategy}
    Let \( \iota:W\to X \) be a morphism between projective varieties and \([Z]\in \Eff_k(W)\) be an extremal effective cycle. Assume
    \begin{enumerate}
        \item \(\iota_\ast: \nc_k(W)\to \nc_k(X) \) is injective, and
        \item If \( [\iota(Z)]=\sum_{j} a_j[Z_j]\text{ in }\nc_k(X)\) for some \(a_j>0\), then \(Z_j\subseteq \iota(W)\).
    \end{enumerate}
    Then \([\iota(Z)]\) spans an extremal ray of \(\Eff_k(X)\), i.e. is an extremal effective cycle.
\end{lem}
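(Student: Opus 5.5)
The plan is to argue straight from the definition of an extremal ray, using hypothesis (2) to pull every summand back to \(W\) and hypothesis (1) to carry the resulting equality back to \(\nc_k(W)\). Here \([\iota(Z)]\) is understood as \(\iota_\ast[Z]\), up to a positive multiple. If \(\iota|_Z\) is generically finite of degree \(d\), then \(\iota_\ast[Z]=d[\iota(Z)]\). If \(\dim\iota(Z)<k\), then \(\iota_\ast[Z]=0\). That would force \([Z]=0\) by injectivity, which is impossible because \([Z]\) spans a ray. So we may assume \(\iota_\ast[Z]\) is a positive multiple of \([\iota(Z)]\), and in particular nonzero.

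Since \(\Eff_k(X)\) is generated by classes of subvarieties, it suffices to show the following: whenever
\[ [\iota(Z)]=\sum_j a_j[Z_j] \quad\text{in } \nc_k(X), \qquad a_j>0, \]
with each \(Z_j\) a \(k\)-dimensional subvariety, every \([Z_j]\) is proportional to \([\iota(Z)]\). By hypothesis (2), each \(Z_j\) lies in \(\iota(W)\).

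Next, lift each \(Z_j\) to \(W\). Apply the argument of \cref{prop:numerical chow facts}(5) to the surjective morphism \(\iota^{-1}(Z_j)\to Z_j\): take a closed point of the fiber over the generic point of \(Z_j\) and let \(Z'_j\) be its closure. This gives a \(k\)-dimensional subvariety \(Z'_j\subseteq W\) with \(\iota(Z'_j)=Z_j\), so \(\iota_\ast[Z'_j]=d_j[Z_j]\) for some \(d_j>0\).

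Now set \(c\) to be the positive constant with \(\iota_\ast[Z]=c[\iota(Z)]\). Then
\[ \iota_\ast[Z]=\iota_\ast\Bigl(\sum_j \tfrac{c\,a_j}{d_j}[Z'_j]\Bigr). \]
Injectivity (1) gives \([Z]=\sum_j \frac{c a_j}{d_j}[Z'_j]\) in \(\nc_k(W)\). This is a decomposition of \([Z]\) into effective classes, so extremality of \([Z]\) in \(\Eff_k(W)\) makes each \([Z'_j]\) a nonnegative multiple of \([Z]\). Pushing forward, each \([Z_j]\) is a nonnegative multiple of \([\iota(Z)]\), which is the required proportionality.

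The main obstacle is the lifting step, in particular making sure \(Z'_j\) has dimension exactly \(k\) so that its pushforward is a positive multiple of \([Z_j]\). Everything else is formal linear algebra with cones.
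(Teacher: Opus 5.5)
Your proof is correct and is essentially the standard argument behind \cite[Proposition 2.5]{CC15} and \cite[Lemma 2.7]{Bla22}, which the paper quotes without reproving: use (2) to lift each summand $Z_j$ to a $k$-dimensional $Z'_j\subseteq W$ via the generic-fiber argument, use injectivity to carry the decomposition back to $\nc_k(W)$, and conclude from the extremality of $[Z]$. You also handle two details the sources leave implicit: the distinction between $\iota_\ast[Z]$ and $[\iota(Z)]$, where injectivity rules out $\dim\iota(Z)<k$, and the requirement that the lift $Z'_j$ has dimension exactly $k$.
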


In this paper, \(\iota\) will be a clutching map between moduli spaces of curves. We need to check (1) of \cref{lem:main strategy} for such maps, which will be done in \cref{sec:inj}. For (2), we will use the following lemma, which is a generalization of \cite[Proposition 2.1]{CC15}.

\begin{lem}\label{lem:contracting subvariety}
    Let \(L\) be a nef line bundle on a projective variety \(X\). For any pseudo-effective \(k\)-cycle \(\alpha\in \pEff_k(X)\), define
    \[ e_L(\alpha):=k+1-\min \Bigl( \{k+1\}\cup \{ l\in \Z\cap [0,k] \ |\ c_1(L)^l\cap \alpha=0 \text{ in }\pEff_{k-l}(X)\subseteq \nc_{k-l}(X)_\R \}  \bigr)  \]
    Suppose that pseudo-effective \(k\)-cycles \(\alpha, \alpha_1, \cdots, \alpha_m\in \pEff_k(X)\) satisfy
    \[ \alpha=\sum_{j=1}^{m} a_j\alpha_j\text{ in }\nc^k(X)_\R \]
    for \( a_j>0 \). Then \( e_L(\alpha_j)\ge e_L(\alpha) \) for all \(1\le j\le m\). In particular, if \(k\)-dimensional subvarieties \(Z, Z_1, \cdots, Z_m\) satisfy
    \[ [Z]=\sum_{j=1}^m a_j[Z_j]\text{ in }\nc^k(X)_\R \]
    for \( a_j>0 \), then \( e_L(Z_j)\ge e_L(Z) \) for every \(j\). Moreover, if \(L|_Z\) is not big, then each \(Z_j\) is also contained in the exceptional locus \(\mathbb{E}(L)\) of \(L\).
\end{lem}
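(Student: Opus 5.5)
The proof rests on two facts recorded in \cref{prop:numerical chow facts}: nef powers preserve pseudo-effectivity (part (3)), and \(\pEff_{k-l}(X)\) is salient (part (4)). The plan is first to prove the inequality for pseudo-effective classes, then specialize to subvarieties, and finally translate "\(e_L(Z)\ge 1\)" into non-bigness via \cref{thm:big}(2).

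\textbf{Step 1: the threshold is monotone in \(l\).} The set \(\{l : c_1(L)^l\cap\alpha=0\}\) is upward closed in \([0,k]\). Indeed, if \(c_1(L)^l\cap\alpha=0\), then \(c_1(L)^{l+1}\cap\alpha=c_1(L)\cap 0=0\), using that the Chern class operation is well defined on \(\nc_\bullet(X)\) (\cref{prop:numerical chow facts}(2)). So \(e_L(\alpha)\) simply records the first vanishing power: with \(l_0\) the minimum in the definition, \(e_L(\alpha)=k+1-l_0\).

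\textbf{Step 2: the inequality \(e_L(\alpha_j)\ge e_L(\alpha)\).} If \(l_0=k+1\) (no power vanishes), then \(e_L(\alpha)=0\) and there is nothing to prove. Otherwise \(l_0\le k\), and applying \(c_1(L)^{l_0}\cap-\) to \(\alpha=\sum a_j\alpha_j\) gives
\[0=\sum_j a_j\, c_1(L)^{l_0}\cap\alpha_j.\]
Each term \(c_1(L)^{l_0}\cap\alpha_j\) lies in \(\pEff_{k-l_0}(X)\) by \cref{prop:numerical chow facts}(3), since \(L\) is nef, and each \(a_j>0\). Fix one index \(j\). Then
\[a_j\, c_1(L)^{l_0}\cap\alpha_j=-\sum_{i\ne j}a_i\, c_1(L)^{l_0}\cap\alpha_i\in\pEff\cap(-\pEff).\]
By salience (\cref{prop:numerical chow facts}(4)) this intersection is \(\{0\}\), so \(c_1(L)^{l_0}\cap\alpha_j=0\). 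Hence the minimum for \(\alpha_j\) is at most \(l_0\), which gives \(e_L(\alpha_j)\ge e_L(\alpha)\).

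\textbf{Step 3: subvarieties.} The statement for subvarieties is the special case \(\alpha=[Z]\) and \(\alpha_j=[Z_j]\), since classes of subvarieties are effective and hence pseudo-effective.

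\textbf{Step 4: the exceptional locus.} Assume \(L|_Z\) is not big; we may take \(k\ge1\), as implicit in the definition of \(\mathbb{E}(L)\).
\begin{enumerate}
\item Since \(L|_Z\) is nef, \cref{thm:big}(2) gives \(\deg\bigl(c_1(L|_Z)^k\cap[Z]\bigr)=0\).
\item By the projection formula for the inclusion \(Z\hookrightarrow X\), the class \(c_1(L)^k\cap[Z]\in\nc_0(X)\) is the pushforward of this class. A class in \(\nc_0(X)\) is determined by its degree, so \(c_1(L)^k\cap[Z]=0\).
\item Therefore \(l_0\le k\), i.e. \(e_L(Z)\ge1\). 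Step 2 then gives \(e_L(Z_j)\ge1\), so \(c_1(L)^k\cap[Z_j]=0\) for every \(j\).
\item Running the same degree argument backwards, \((L|_{Z_j})^{k}=0\). Since \(L|_{Z_j}\) is nef, \cref{thm:big}(2) shows \(L|_{Z_j}\) is not big.
\item Each \(Z_j\) is a positive-dimensional subvariety on which \(L\) is not big, so \(Z_j\subseteq\mathbb{E}(L)\) directly from the definition of \(\mathbb{E}(L)\).
\end{enumerate}

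\textbf{Main obstacle.} The only real content is the salience argument in Step 2. It hinges on having the intermediate classes \(c_1(L)^{l_0}\cap\alpha_j\) lie in the pseudo-effective cone, so that a positive combination summing to zero forces each term to vanish. The rest is bookkeeping: identifying numerical vanishing in \(\nc_0(X)\) with vanishing of the top self-intersection, and handling the trivial case \(l_0=k+1\).
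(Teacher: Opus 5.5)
Your proposal is correct and follows essentially the same route as the paper. Salience of \(\pEff_{k-l}(X)\), applied to the nef-power classes \(c_1(L)^{l}\cap\alpha_j\), gives the inequality, and the projection formula together with the numerical criterion for bigness of nef bundles turns \(e_L(Z)\ge 1\) into non-bigness of \(L|_Z\); your remark that the vanishing set is upward closed in \(l\) just makes explicit a step the paper uses implicitly when it reduces \(e_L(Z)\ge 1\) to the vanishing of \(c_1(L)^{\dim Z}\cap[Z]\).
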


\begin{proof}
    Fix \(l\) such that \(c_1(L)^l\cap \alpha=0\). It suffices to show that \(c_1(L)^l\cap \alpha_j=0\). We have
    \[ c_1(L)^l\cap \alpha= \sum_{j=1}^m a_j(c_1(L)^l\cap \alpha_j)=0.\]
    Note that \( c_1(L)^l\cap \alpha_j\in \pEff_{k-l}(X) \) by \cref{prop:numerical chow facts} (3). Hence, by \cref{prop:numerical chow facts} (4), \(c_1(L)^l\cap \alpha_j=0\) for all \(1\le j\le m\). 

    Let \(\iota: Z \to X\) be the embedding. Then
    \[c_1(L)^l\cap [Z]=\iota_\ast (c_1(\iota^\ast L)^l\cap [Z]).\]
    Since \(c_1(\iota^\ast L)^{\dim Z}\cap [Z]\in N_{0}(Z)\simeq \Q\), \(c_1(L)^{\dim Z}\cap [Z]=0\) if and only if \(c_1(\iota^\ast L)^{\dim Z}\cap [Z]=0\). Therefore, \(e_L(Z)\ge 1\) if and only if \(L|_Z\) is not big. It follows that if \(L|_Z\) is not big, then \(L|_{Z_j}\) is also not big, and hence \(Z_j\subseteq \mathbb{E}(L)\).
\end{proof}

The following proposition implies that if \(L\) is a semiample divisor, then \cref{lem:contracting subvariety} specializes to \cite[Proposition 2.1]{CC15}.

\begin{prop}\label{prop:compatible}
    Let \(f:X\to Y\) be a morphism to a projective variety, let \(H\) be an ample line bundle on \(Y\), and \(L:=f^\ast H\). Then for any subvariety \(Z\subseteq X\),
    \[ e_L(Z)=e_f(Z) \]
    where \(e_f(Z):=\dim Z-\dim f(Z)\). 
\end{prop}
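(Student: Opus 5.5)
Write \(k=\dim Z\) and \(d=\dim f(Z)\), so \(e_f(Z)=k-d\). By definition, \(e_L(Z)=k+1-l_0\), where \(l_0\) is the smallest \(l\in[0,k]\) with \(c_1(L)^l\cap[Z]=0\) in \(\nc_{k-l}(X)\), and \(l_0=k+1\) if there is no such \(l\). The claim \(e_L(Z)=k-d\) is therefore equivalent to \(l_0=d+1\), that is:
\begin{enumerate}
\item[(a)] \(c_1(L)^{d+1}\cap[Z]=0\) when \(d+1\le k\);
\item[(b)] \(c_1(L)^{l}\cap[Z]\neq 0\) for every \(0\le l\le d\).
\end{enumerate}
The case \(d=k\) is exactly the statement that no \(l\le k\) kills \([Z]\), which is (b). As in the proof of \cref{lem:contracting subvariety}, I would first reduce everything to \(Z\) itself: with \(\iota:Z\to X\) the inclusion, \(c_1(L)^l\cap[Z]=\iota_\ast(c_1(\iota^\ast L)^l\cap[Z])\). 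Set \(g=f|_Z:Z\to W:=f(Z)\), a surjective morphism of projective varieties with \(\iota^\ast L=g^\ast(H|_W)\).

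\textbf{Vanishing (a).} By the projection formula,
\[ c_1(g^\ast H)^{d+1}\cap[Z]=c_1(g^\ast H)^{d+1}\cap[Z], \quad g_\ast\bigl(c_1(g^\ast H)^{d+1}\cap[Z]\bigr)=c_1(H)^{d+1}\cap g_\ast[Z]=0, \]
since \(g_\ast[Z]\) is a \(k\)-cycle supported on the \(d\)-dimensional \(W\) and \(k\ge d+1\). That only shows the pushforward vanishes, which is not enough. Instead I would work at the level of cycles. Represent \(H\) by a very ample effective divisor and intersect with \(d+1\) general members \(H_1,\dots,H_{d+1}\) of \(|mH|\). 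Since \(\dim W=d\), we have \(H_1\cap\cdots\cap H_{d+1}\cap W=\emptyset\), so the pulled-back divisors \(g^\ast H_i\) have empty common intersection on \(Z\). Hence \(c_1(g^\ast H)^{d+1}\cap[Z]\) is represented by a cycle supported on the empty set, computed successively via \cite[Chapter 2]{Ful98} with each \(g^\ast H_i\) restricted to the previous intersection, and so it is \(0\) already in \(A_\ast\). Pushing forward by \(\iota\) gives (a).

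\textbf{Nonvanishing (b).} It suffices to treat \(l=d\), because if \(c_1(L)^l\cap[Z]=0\) then every higher power is also \(0\). To show \(c_1(L)^d\cap[Z]\neq0\) in \(\nc_{k-d}(X)\), I would pair it with an ample class. Let \(A\) be ample on \(X\). Then
\[ \deg\bigl(c_1(A)^{k-d}c_1(L)^d\cap[Z]\bigr)=\deg\bigl(c_1(A|_Z)^{k-d}c_1(g^\ast H)^d\cap[Z]\bigr). \]
Compute \(c_1(g^\ast H)^d\cap[Z]\) by taking \(d\) general members of \(|mH|\): they meet \(W\) in \(m^d\deg_H W>0\) reduced points \(w\). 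Their pullbacks cut \(Z\) in the fibres \(g^{-1}(w)\), each of dimension \(k-d\) at a general point. This makes \(c_1(g^\ast H)^d\cap[Z]\) a positive combination of the \((k-d)\)-dimensional components of general fibres, so it is a nonzero effective cycle. Its degree against \(A^{k-d}\) is then strictly positive, and the numerical class is nonzero.

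The main obstacle is justifying this last fibre computation: that the refined intersection \(g^\ast H_1\cdots g^\ast H_d\cap[Z]\) equals the fundamental cycle of \(g^{-1}(H_1\cap\cdots\cap H_d\cap W)\) with positive multiplicities. I would use generic flatness of \(g\) over a dense open subset \(U\subseteq W\), choose the \(H_i\) so that their intersection with \(W\) lies in \(U\), and apply the dimension and properness statements in \cite[Chapter 7]{Ful98} to conclude that the intersection is proper, so the multiplicities are positive. An alternative route is to cite \cite[Example 12.1.x]{Ful98} on nefness, or simply to note that \(A|_Z^{\,k-d}\cdot(g^\ast H)^d>0\) by restricting to a general fibre.
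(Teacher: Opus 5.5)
Your proof is correct and follows the paper's route: reduce to \(Z\) via \(c_1(L)^l\cap[Z]=\iota_\ast\bigl(c_1(g^\ast H)^l\cap[Z]\bigr)\), then show that the vanishing threshold is exactly \(l=\dim f(Z)+1\); the difference is that the paper transfers vanishing between \(Z\) and \(X\) using effectivity of \(c_1(g^\ast H)^l\cap[Z]\) and only asserts the threshold, whereas you prove it (vanishing already in \(A_\ast(Z)\) from \(d+1\) general sections with no common zero on \(W\), and nonvanishing by pairing with \(c_1(A)^{k-d}\) against the positive cycle cut out by \(d\) general sections meeting \(W\) inside a flat locus). The tautological display at the start of (a) and the placeholder reference ``Example 12.1.x'' are superfluous and can be dropped.
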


\begin{proof}
    Let \(\iota:Z\hookrightarrow X\) be the inclusion and \(h:=f\circ \iota\). Then, by \cite[Proposition 2.5(c)]{Ful98},
    \[ c_1(L)^l\cap [Z]=\iota_\ast (c_1(\iota^\ast L)^l\cap [Z])=\iota_\ast (c_1(h^\ast H)^l\cap [Z]) .\]
    Since \(c_1(h^\ast H)^l\cap [Z]\) is effective by \cref{prop:numerical chow facts} (3), we have \(c_1(L)^l\cap [Z]=0\) if and only if \(c_1(h^\ast H)^l\cap [Z]=0\). This is equivalent to \(l\ge \dim f(Z)+1\), which implies the assertion. 
\end{proof}

In \cite{Che18}, Dawei Chen asked for a generalization of \cite[Proposition 2.1]{CC15} to rational maps, whose corresponding divisor is effective. \cref{lem:contracting subvariety} may be viewed as a step toward this question, replacing morphisms by nef divisors. In \cref{rmk:semiample}, we will see that this is essential: we use a nef divisor for which the corresponding morphism does not exist in characteristic \(0\).

To apply \cref{lem:contracting subvariety}, we need a nef divisor \(L\) and a closed subvariety \(Z\). For \(L\), we will use the semigroup \(\kappa\) divisors defined in \cref{sec:cont}. To define \(Z\), we will use extremal effective cycles from previous work, modified using \cref{lem:extremal product everything} and \cref{lem:extremal product point} below.

\begin{lem}(\cite[Lemma 2.6]{Bla22})\label{lem:extremal product everything}
    Let \(X\) and \(Y\) be projective varieties. Let \( [Z]\in \Eff^d(X) \) be an extremal effective cycle. Then \( [Z\times Y]\in \Eff^d(X\times Y) \) is also extremal.  
\end{lem}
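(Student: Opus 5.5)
Write \(p\colon X\times Y\to X\) for the projection, and set \(e=\dim Y\) and \(k=\dim Z\). The plan is to apply the argument of \cref{lem:contracting subvariety}, with \(\alpha\) replaced by \([Z\times Y]\) and with the nef class taken to be the pullback of an ample class from the second factor. We then push forward to \(X\) to recover extremality there. Fix an ample line bundle \(H\) on \(Y\) and put \(L=q^\ast H\), where \(q\colon X\times Y\to Y\) is the second projection; \(L\) is nef. Given \([Z\times Y]\) as a class in \(\nc_{k+e}(X\times Y)_\R\), it suffices to consider a decomposition
\[ [Z\times Y]=\sum_j a_j\beta_j,\qquad a_j>0,\ \beta_j\in \Eff_{k+e}(X\times Y), \]
where each \(\beta_j\) is the class of a subvariety \(W_j\). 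We must show that every \(\beta_j\) is proportional to \([Z\times Y]\).

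We intersect with \(c_1(L)^e\). Since \(H^e\) is a positive multiple \(c\) of the class of a point, we get \(c_1(L)^e\cap[Z\times Y]=c\,[Z\times\{y\}]\). Pushing forward under \(p\), which maps \(Z\times \{y\}\) isomorphically onto \(Z\), gives \(p_\ast(c_1(L)^e\cap[Z\times Y])=c[Z]\). For each \(W_j\) we have \(c_1(L)^e\cap[W_j]\in \Eff_k(X\times Y)\) by \cref{prop:numerical chow facts} (3), because \(L\) is semiample. Applying \(p_\ast\) then gives \(\gamma_j:=p_\ast(c_1(L)^e\cap[W_j])\in\Eff_k(X)\), and
\[ c[Z]=\sum_j a_j\gamma_j . \]
Extremality of \([Z]\) in \(\Eff_k(X)\) now forces \(\gamma_j=t_j[Z]\) with \(t_j\ge 0\).

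The main obstacle is to get back from the classes \(\gamma_j\) to the classes \(\beta_j\), since the operation \(\beta\mapsto p_\ast(c_1(L)^e\cap\beta)\) need not be injective on \(\nc_{k+e}(X\times Y)\). One way around this is to record more information by also pushing forward \(c_1(L)^{e-i}\cdot c_1(p^\ast A)^{i}\cap\beta\) for ample \(A\) on \(X\). A cleaner route, which I would try first, is a geometric argument. If \(\beta_j\neq 0\) has \(\gamma_j=0\), then \(W_j\to Y\) is not surjective with the correct fiber dimension. The degree \(L^e\cdot(\text{fiber-class})\) vanishing gives \(\dim q(W_j)<e\), and so \(p\) restricted to \(W_j\) has positive-dimensional fibers in \(Y\)-directions that fail to cover \(Y\). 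One then intersects with an ample class on \(X\times Y\) to obtain a contradiction with \([Z\times Y]\), whose intersections with all mixed monomials are computed from the Künneth-type structure. Concretely, for any \(\delta\in \nc^{k+e}\) of the form \(p^\ast\xi\cdot q^\ast\eta\), the number \(\delta\cdot[Z\times Y]\) vanishes unless \(\eta\) has top degree \(e\). Positivity of each \(a_j\delta\cdot\beta_j\) for suitable nef products then forces each \(W_j\) to dominate \(Y\) with \(k\)-dimensional generic fibers \(F_j\subset X\).

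Finally, for \(W_j\) dominating \(Y\), consider the general fiber class over \(y\in Y\), whose pushforward is \(\gamma_j/c\). The fibers over points \(y\) therefore satisfy \([F_{j,y}]=t_j[Z]/c\), and they are all extremal multiples of \([Z]\). Using that numerical equivalence on \(X\times Y\) is detected by pairing with classes \(p^\ast\xi\cdot q^\ast\eta\) together with the remaining cross terms, I would conclude \(\beta_j=(t_j/c)[Z\times Y]\). This last detection step is the point I expect to need the most care. If full Künneth fails, I would instead argue via the flat family of fibers, using the fact that \(W_j\) and \((t_j/c)\,Z\times Y\) have equal intersection numbers against all products of pullbacks of nef classes, together with the general-position statement above.
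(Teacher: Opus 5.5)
Your first step is sound: intersecting with \(c_1(q^\ast H)^e\) and pushing forward along \(p\) gives \(c[Z]=\sum_j a_j\gamma_j\), so \(\gamma_j=t_j[Z]\). The gap is in recovering \(\beta_j\) from \(\gamma_j\). You aim to show that each \(W_j\) dominates \(Y\) with \(k\)-dimensional generic fibers whose classes are multiples of \([Z]\). That is the wrong property, and it does not imply proportionality.

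Take \(X=Y=\mathbb{P}^1\), \(Z\) a point (trivially extremal in \(\Eff_0(\mathbb{P}^1)\)), and \(W\) the diagonal. Then \(W\) dominates \(Y\), every fiber is a point of class \([Z]\), and \(p_\ast(c_1(q^\ast H)\cap[W])=[Z]\). Yet \([W]=[\mathrm{pt}\times\mathbb{P}^1]+[\mathbb{P}^1\times\mathrm{pt}]\) is not proportional to \([Z\times Y]\). So no detection argument can finish from the data you collected. Your claim that \(W_j\) and \((t_j/c)\,Z\times Y\) have equal intersection numbers against all products of pulled-back nef classes is exactly what is unproved, and for the diagonal it fails against \(p^\ast\mathcal{O}(1)\). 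Even that agreement would not suffice in general, since \(\nc_\ast(X\times Y)\) need not be detected by such products. For example, on \(E\times E\) with \(E\) an elliptic curve, the diagonal is not detected this way.

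The missing idea is to use the nef class pulled back from the \emph{first} factor; you mention mixing in \(c_1(p^\ast A)\) but set it aside.
\begin{enumerate}
\item For \(A\) ample on \(X\), \(c_1(p^\ast A)^{k+1}\cap[Z\times Y]=p^\ast\bigl(c_1(A)^{k+1}\cap[Z]\bigr)=0\), i.e. \(e_{p^\ast A}(Z\times Y)=\dim Y\).
\item By \cref{lem:contracting subvariety} (salience, plus effectivity of \(c_1(p^\ast A)^{k+1}\cap[W_j]\)), the same vanishing holds for every \(W_j\). Hence \(\dim p(W_j)\le k\) by \cref{prop:compatible}.
\item Since \(W_j\subseteq p(W_j)\times Y\) and \(\dim W_j=k+\dim Y\), this forces \(W_j=Z_j\times Y\) with \(Z_j:=p(W_j)\) of dimension \(k\).
\item Your computation now gives \(\gamma_j=c[Z_j]\). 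Extremality of \([Z]\) gives \([Z_j]=s_j[Z]\), and hence \([W_j]=s_j[Z\times Y]\).
\end{enumerate}
This is the argument the paper gives for \cref{lem:extremal product point}, with the roles of the two projections interchanged. For the present statement the paper itself only cites \cite[Lemma 2.6]{Bla22}.
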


\begin{lem}\label{lem:extremal product point}
    Let \(X\) and \(Y\) be projective varieties. Let \( [Z]\in \Eff^d(X) \) be an extremal effective cycle. Then \( [Z\times y]\in \Eff^{d+\dim Y}(X\times Y) \) is also extremal for any point \(y\in Y\).  
\end{lem}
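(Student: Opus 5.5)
We will apply \cref{lem:main strategy} to the inclusion \(\iota: X\cong X\times y\hookrightarrow X\times Y\), \(x\mapsto (x,y)\), with \(W=X\). Since \(\iota(Z)=Z\times y\), the statement follows once we check the two hypotheses of that lemma, working with dimensions \(k=\dim Z\) rather than codimensions.

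\emph{Hypothesis (1): injectivity of \(\iota_\ast\).} We will construct a left inverse numerically. Let \(p:X\times Y\to X\) be the first projection, and let \(A\) be a very ample divisor on \(Y\) with \(e:=\dim Y\). Take \(H:=q^\ast A\), where \(q\) is the second projection. For a \(k\)-cycle \(\beta\) on \(X\), the cycle \(p^\ast\beta\cdot\) (a cycle representing \(y\) in \(Y\)) recovers \(\iota_\ast\beta\). More directly, we check \(\iota_\ast\beta\equiv 0\Rightarrow\beta\equiv 0\) by pairing with classes \(p^\ast c\), where \(c\) ranges over products of Chern classes of vector bundles on \(X\). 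By the projection formula, \(\deg(p^\ast c\cap\iota_\ast\beta)=\deg(\iota^\ast p^\ast c\cap \beta)=\deg(c\cap\beta)\), because \(p\circ\iota=\mathrm{id}_X\). So if \(\iota_\ast\beta\) is numerically trivial, all these degrees vanish, and \(\beta=0\) in \(\nc_k(X)\). (Numerical equivalence on a possibly singular projective variety is defined by such Chern-class pairings, following \cite{FL17}, and pullback of bundles along \(p\) is allowed.)

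\emph{Hypothesis (2): confinement.} Suppose \([Z\times y]=\sum a_j[Z_j]\) with \(a_j>0\) and \(Z_j\subseteq X\times Y\) of dimension \(k\). We use \cref{lem:contracting subvariety} with \(L=q^\ast A'\) for \(A'\) ample on \(Y\); this \(L\) is nef, indeed semiample. By \cref{prop:compatible}, \(e_L(Z\times y)=k-0=k\), which is the maximal value among \(k\)-dimensional subvarieties. Lemma \ref{lem:contracting subvariety} then gives \(e_L(Z_j)\ge k\), so \(\dim q(Z_j)=0\), and each \(Z_j\) lies in a fiber \(X\times y_j\).

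It remains to force \(y_j=y\). This is the main subtle step. We plan to exploit \(e_L\) more finely than the lemma does, by pairing with \(q^\ast\) of a point class. Since \(Y\) is only a variety, we instead argue as follows. Let \(D\) be an effective very ample divisor on \(Y\) that avoids \(y\) but contains a given \(y_j\ne y\). Then \(q^\ast D\cap [Z\times y]=0\), while \(q^\ast D\cap [Z_j]\) is not obviously nonzero, because \(Z_j\) lies in the fiber. So this pairing does not separate points, and a different approach is needed.

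Numerically, the cycles \(Z\times y\) and \(Z\times y'\) are in fact equal when \(Y\) is connected, since points are algebraically equivalent. Hence the condition \(Z_j\subseteq\iota(W)\) cannot be forced literally. We will instead apply \cref{lem:main strategy} with \(\iota\) replaced by the family version: each \(Z_j=Z_j'\times y_j\) with \([Z_j']\) effective on \(X\), and \([Z_j\times y_j]=\iota_\ast[Z_j']\) numerically. The relation then pulls back via the injective \(\iota_\ast\) to \([Z]=\sum a_j[Z'_j]\) in \(\nc_k(X)\). Extremality of \([Z]\) gives each \([Z_j']\) proportional to \([Z]\), and therefore each \([Z_j]\) is proportional to \([Z\times y]\). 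This is exactly extremality, so we bypass hypothesis (2) as stated and use the proof of \cref{lem:main strategy} directly.
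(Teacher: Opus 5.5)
Your final argument is correct and is essentially the paper's proof. Pulling back an ample class from \(Y\) (equivalently, \cite[Proposition 2.1]{CC15} for the second projection, via \cref{prop:compatible}) forces every \(Z_j\) into a fiber \(X\times y_j\); the relation is then transferred to \(X\) and extremality of \([Z]\) finishes. The only difference is in the transfer: you use injectivity of \(\iota_\ast\) together with the numerical equivalence of \([Z_j'\times y_j]\) and \([Z_j'\times y]\), while the paper simply pushes forward along the first projection.

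Your observation that hypothesis (2) of \cref{lem:main strategy} cannot hold literally when \(\dim Y>0\) is exactly why the paper argues directly rather than through that lemma. In a write-up you can drop the abandoned detours and keep only the final paragraph.
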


\begin{proof}[Proof of \cref{lem:extremal product point}]
    Let \( [E_1],\cdots, [E_m]\in \Eff^{d+\dim Y}(X\times Y) \) be effective cycles such that
    \begin{equation}\label{eqn:eff}
        [Z\times y]=\sum_{j=1}^m a_j[E_j]
    \end{equation}
    for some \(a_j>0\). Consider \( \pi_2:X\times Y\to Y \). Then, by \cite[Proposition 2.1]{CC15},
    \[ e_{\pi_2}(E_j)\ge e_{\pi_2}(Z\times y)=\dim Z, \]
    where \(e_{\pi_2}\) is as defined in \cite[Proposition 2.1]{CC15} or \cref{prop:compatible}. Since \(\dim E_j=\dim Z\), this means that \( \pi_2(E_j) \) is a point, i.e. \(E_j=Z_j\times y_j \) for some point \(y_j\in Y\) and a subvariety \(Z_j\subseteq X\). By taking the pushforward of \cref{eqn:eff} along \( \pi_1:X\times Y\to X \), we obtain
    \[ [Z]=\sum_{j=1}^m a_j[Z_j]. \]
    Since \(Z\) is extremal, \([Z_j]\) is proportional to \([Z]\), so \( [E_j] \) is also proportional to \( [Z\times y] \). Since we work modulo numerical equivalence, the classes of all closed points of \(Y\) coincide in \(\N_0(Y)\). Hence, \([Z\times y]\) is extremal.
\end{proof}

%% file: sections/cont.tex
In \cite{Cho25b}, the author defined \textbf{semigroup kappa divisors} and proved that they are always nef and, moreover, semiample in positive characteristic. The definition of semigroup kappa divisors is combinatorial, and we refer to \cite[Definition 7.1]{Cho25b}. They form a large class of nef divisors, but in this paper we need only a small subset of these divisors. Recall that
\[ \kappa=12\lambda+\psi-\delta \]
is an ample divisor on \(\M{g}{n}\) by \cite{Cor93}. 

\begin{prop}\label{prop:semigroup Kappa}
    Let \(\Delta_{i,I}\) be a boundary divisor satisfying the following condition: 
    \begin{equation}\label{eqn:cond}
        \text{If }I=\emptyset, \text{ then }2i>g\text{, and if }I=[n]\text{, then }2i<g. 
    \end{equation}
    Then \(\kappa+\delta_{i,I}\) is nef. 
\end{prop}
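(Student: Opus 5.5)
The plan is to verify nefness curve by curve, using the clutching morphism to handle curves lying inside \(\Delta_{i,I}\). This avoids the general machinery of \cite[Definition 7.1]{Cho25b}; alternatively one could check that \(\kappa+\delta_{i,I}\) is one of the semigroup kappa divisors there. Let \(B\subseteq \M{g}{n}\) be an irreducible curve.

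\emph{Curves not in the divisor.} If \(B\not\subseteq \Delta_{i,I}\), then \(\delta_{i,I}\cdot B\ge 0\). Since \(\kappa\) is ample by \cite{Cor93}, this gives \((\kappa+\delta_{i,I})\cdot B>0\).

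\emph{Curves in the divisor: reduction to the product.} Suppose \(B\subseteq\Delta_{i,I}\). Let
\[\xi:\M{i}{I\cup\{\ast\}}\times \M{g-i}{I^c\cup\{\bullet\}}\to \Delta_{i,I}\subseteq \M{g}{n}\]
be the clutching map. It is finite and surjective onto \(\Delta_{i,I}\), so by \cref{prop:numerical chow facts} (5) there is a curve \(B'\) with \(\xi_\ast[B']\) a positive multiple of \([B]\). It therefore suffices to show that \(\xi^\ast(\kappa+\delta_{i,I})\) is nef on the product.

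\emph{Computing the pullback.} I will use the standard formulas:
\begin{itemize}
\item \(\xi^\ast\lambda=\lambda_1+\lambda_2\) and \(\xi^\ast\psi_j=\psi_j\);
\item \(\xi^\ast\delta=\delta_1+\delta_2-\psi_\ast-\psi_\bullet\), where the normal bundle of \(\Delta_{i,I}\) contributes \(-\psi_\ast-\psi_\bullet\);
\item \(\xi^\ast\delta_{i,I}=-\psi_\ast-\psi_\bullet+R\), where \(R\) is the sum of boundary divisors of the product whose generic curve has a second node of type \((i,I)\).
\end{itemize}
Together these give \(\xi^\ast\kappa=\kappa_1+\kappa_2\). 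Hence
\[\xi^\ast(\kappa+\delta_{i,I})=(\kappa_1-\psi_\ast)+(\kappa_2-\psi_\bullet)+R.\]

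\emph{Condition \eqref{eqn:cond} makes \(R\) vanish.} Suppose a stable curve had two distinct separating nodes both of type \((i,I)\), i.e.\ each cutting off a genus-\(i\) piece carrying exactly the markings \(I\).
\begin{itemize}
\item If the two genus-\(i\) sides are nested, the part between the nodes has genus \(0\), no markings, and only two nodes. This is unstable.
\item Otherwise, the genus-\(i\) side of each node lies on the complementary side of the other. For \(\emptyset\ne I\ne[n]\) this would force the markings of \(I\) to be in two disjoint places, which is impossible. For \(I=\emptyset\) it would need \(i\le g-i\), and for \(I=[n]\) it would need a genus-\(i\) piece inside the complementary side carrying no markings. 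Condition \eqref{eqn:cond} excludes exactly these symmetric situations, where \(\Delta_{i,\emptyset}=\Delta_{g-i,[n]}\).
\end{itemize}
So \(R=0\).

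\emph{Nefness of the two pieces.} On \(\M{h}{m+1}\), let \(\pi\) forget the point \(p\). Then
\[\pi^\ast\psi_j=\psi_j-\delta_{0,\{j,p\}},\qquad \pi^\ast\delta=\delta-\sum_j\delta_{0,\{j,p\}},\qquad \pi^\ast\lambda=\lambda.\]
Combining these gives \(\pi^\ast\kappa_{h,m}=\kappa-\psi_p\). So \(\kappa-\psi_p\) is the pullback of an ample class, hence nef, whenever \(\M{h}{m}\) is stable. The remaining cases are:
\begin{itemize}
\item \(\M{0}{3}\): this is a point, so there is nothing to check.
\item \(\M{1}{1}\): here \(12\lambda=\delta_{\mathrm{irr}}\) and \(\psi=\lambda\), so \(\kappa-\psi=12\lambda-\delta_{\mathrm{irr}}=0\), which is nef.
\end{itemize}
Thus \(\xi^\ast(\kappa+\delta_{i,I})\) is a sum of nef classes, which finishes the argument.

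\emph{Main obstacle.} I expect the step needing most care to be the exact pullback of \(\delta_{i,I}\): correctly identifying the self-intersection term, and checking that \eqref{eqn:cond} is precisely what kills \(R\). If \(R\) were nonzero, curves lying in \(R\) could have negative degree and would require an inductive argument.
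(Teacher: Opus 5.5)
Your proof is correct and is essentially the paper's argument: you reduce to curves lying in \(\Delta_{i,I}\) (the paper does this via the exceptional locus and the ampleness of \(\kappa\)), then identify \(\xi^\ast(\kappa+\delta_{i,I})\) with \(\pi_{1,s}^\ast\kappa+\pi_{2,s}^\ast\kappa\); the paper quotes this identity from \cite[Lemma 1]{AC09} together with \eqref{eqn:cond}, whereas you derive it by hand and correctly show that \eqref{eqn:cond} is what kills \(R\). One small slip in that step: your dichotomy (nested, or each genus-\(i\) side inside the other's complementary side) misses the case where the two genus-\(i\) sides overlap and the two complementary sides are disjoint, which is exactly the \(I=[n]\) obstruction (two disjoint unmarked pieces of genus \(g-i\), forcing \(2i\ge g\)); you can handle it at once by writing \(\Delta_{i,[n]}=\Delta_{g-i,\emptyset}\) and reusing your \(I=\emptyset\) case.
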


\(\kappa+\delta_{i,I}\) is a special case of semigroup kappa divisors, and a generalization of \cref{prop:semigroup Kappa} is proved in \cite[Theorem 7.5]{Cho25b}. We include the proof of \cref{prop:semigroup Kappa} for the convenience of the reader since in this case it is simple, and the proof illustrates how the corresponding morphisms look if they exist. Note that \(\Delta_{i, I}\simeq \M{i}{I+1}\times \M{g-i}{I^c+1}\), and define \(\pi_{1, s}\) and \(\pi_{2, s}\) by
\begin{equation}\label{eqn:proj}
\begin{aligned}
     &\pi_{1,s}:\M{i}{I+1}\times \M{g-i}{I^c+1}\to \M{i}{I+1}\to \M{i}{I},\\ &\pi_{2,s}:\M{i}{I+1}\times \M{g-i}{I^c+1}\to \M{g-i}{I^c+1}\to \M{g-i}{I^c},
\end{aligned}
\end{equation}
that is, we project onto each factor and then forget the node. If \(\M{i}{I+1}\) or \(\M{g-i}{I^c+1}\) is \(\M{0}{3}\) or \(\M{1}{1}\), then the corresponding map \(\pi_s\) is defined to be the constant map.

\begin{proof}
    Since \(\kappa\) is ample, the exceptional locus of \(\kappa+\delta_{i,I}\) is contained in \(\Delta_{i,I}\). Hence, it is enough to prove that the restriction of \(\kappa+\delta_{i,I}\) to \(\Delta_{i,I}\) is nef. We have
    \begin{equation}\label{eqn:Kappa}
        (\kappa+\delta_{i,I})|_{\Delta_{i,I}}=\pi_{1, s}^\ast \kappa + \pi_{2, s}^\ast \kappa
    \end{equation}
    by \cite[Lemma 1]{AC09} and \cref{eqn:cond}. Since \(\kappa\) is ample, this is nef. 
\end{proof}

In general, contractions corresponding to semigroup kappa divisors \cite[Definition 7.1]{Cho25b} can be described in a similar combinatorial way: they forget nodes of certain boundary strata. Although we will only use the divisors in \cref{prop:semigroup Kappa} for the main theorems of the paper, so the reader who is only interested in their proofs can safely skip the rest of this section, other semigroup kappa divisors are also useful, as the following example shows.

Choose \(g_i, I_i\) for \(1\le i\le 3\) such that \(g_1+g_2+g_3=g\), \(I_1 \sqcup I_2\sqcup I_3=[n]\), the \(I_i\)'s are nonempty and \(|I_i|\ge 3\) if \(g_i=0\). Let
\[ \Delta_{\Gamma_1}:=\Delta_{g_2, I_2}\cap \Delta_{g_3, I_3}, \Delta_{\Gamma_2}:=\Delta_{g_3, I_3}\cap \Delta_{g_1, I_1}, \Delta_{\Gamma_3}:=\Delta_{g_1, I_1}\cap \Delta_{g_2, I_2}. \]

\begin{prop}\label{prop:codim 2}
    Each \(\Delta_{\Gamma_i}\) spans an extremal ray in \(\Eff^2(\M{g}{n})\).     
\end{prop}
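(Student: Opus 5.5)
By symmetry it suffices to treat \(\Delta_{\Gamma_3}=\Delta_{g_1,I_1}\cap\Delta_{g_2,I_2}\). Its generic curve has three components. The outer components carry \((g_1,I_1)\) and \((g_2,I_2)\). The middle component has genus \(g_3\), carries \(I_3\), and meets each outer component at one node. Suppose
\[[\Delta_{\Gamma_3}]=\sum_j a_j[Z_j],\qquad a_j>0,\]
with each \(Z_j\) a codimension-\(2\) subvariety. The plan has two steps. First, use \cref{lem:contracting subvariety} with nef semigroup kappa divisors to confine every \(Z_j\) to \(\Delta_{\Gamma_3}\). Second, since \(Z_j\) and \(\Delta_{\Gamma_3}\) have the same dimension and \(\Delta_{\Gamma_3}\) is irreducible, this gives \(Z_j=\Delta_{\Gamma_3}\) for every \(j\), and hence extremality. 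Note that no injectivity statement is needed here, since \(W\) would be the irreducible stratum itself.

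\emph{Step 1: confining \(Z_j\) to \(\Delta_{g_1,I_1}\).} Take \(L_1=\kappa+\delta_{g_1,I_1}\), oriented so that \eqref{eqn:cond} holds. Since \(I_1\) and \(I_1^c\supseteq I_2\cup I_3\) are both nonempty, \eqref{eqn:cond} is automatic, so \(L_1\) is nef by \cref{prop:semigroup Kappa}. On \(\Delta_{g_1,I_1}\simeq\M{g_1}{I_1+1}\times\M{g-g_1}{I_1^c+1}\) we have
\[L_1|_{\Delta_{g_1,I_1}}=\pi_{1,s}^\ast\kappa+\pi_{2,s}^\ast\kappa\]
by \eqref{eqn:Kappa}. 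The stratum \(\Delta_{\Gamma_3}\) has the form \(\M{g_1}{I_1+1}\times D\), where \(D\subseteq\M{g-g_1}{I_1^c+1}\) is the boundary divisor in which \(I_2\) and the genus-\(g_2\) part split off, and the node point lies on the \(g_3\)-side. Forgetting the node point contracts \(D\): the image of \(D\) in \(\M{g-g_1}{I_1^c}\) loses one dimension, because the position of the node on the middle component is forgotten. Hence \(L_1|_{\Delta_{\Gamma_3}}\) is not big. If instead we try \(L=\kappa+\delta\) for a single boundary divisor without orienting, things fail, so the orientation of \eqref{eqn:cond} matters.

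By \cref{lem:contracting subvariety}, every \(Z_j\) lies in \(\mathbb{E}(L_1)\). Since \(\kappa\) is ample, \(\mathbb{E}(L_1)\subseteq\Delta_{g_1,I_1}\) by \cref{thm:big}(1). Thus every \(Z_j\subseteq\Delta_{g_1,I_1}\).

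\emph{Step 2: confining \(Z_j\) to \(\Delta_{g_2,I_2}\).} Repeat Step 1 with \(L_2=\kappa+\delta_{g_2,I_2}\). This gives every \(Z_j\subseteq\Delta_{g_2,I_2}\), so every \(Z_j\subseteq\Delta_{g_1,I_1}\cap\Delta_{g_2,I_2}\).

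\emph{The main obstacle.} The intersection \(\Delta_{g_1,I_1}\cap\Delta_{g_2,I_2}\) may have other components of the same codimension, and a priori some \(Z_j\) could lie in one of them. Moreover, non-bigness alone only says that \(Z_j\) meets the exceptional locus. I would handle this with the finer invariant \(e_L\) rather than bigness alone.

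One might first try \(L=L_1+L_2\), but this is not a semigroup kappa divisor; instead I would compute \(e_{L_i}\) on \(\Delta_{\Gamma_3}\) and on each candidate component. On \(\Delta_{\Gamma_3}\), forgetting the node contracts exactly one dimension, so \(e_{L_1}(\Delta_{\Gamma_3})\ge 1\). On a competing codimension-\(2\) stratum inside \(\Delta_{g_1,I_1}\), the map \(\pi_{1,s}\times\pi_{2,s}\) is generically finite unless the \(\M{g-g_1}{I_1^c+1}\)-factor degenerates in a way that puts the node point on a component that gets contracted. Such a degeneration forces the dual graph to be exactly \(\Gamma_3\). Combining this with the analogous statement for \(\Delta_{g_2,I_2}\) pins down \(Z_j=\Delta_{\Gamma_3}\).

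Checking these dimension counts for the image of \(Z_j\) under the node-forgetting maps is where I expect the real work. In particular, the case \(g_3=0\), \(|I_3|\) small needs care, since there the middle component has few special points.
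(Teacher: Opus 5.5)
Your Steps 1 and 2 are correct. Since \(I_1,I_2\notin\{\emptyset,[n]\}\), the divisors \(L_1=\kappa+\delta_{g_1,I_1}\) and \(L_2=\kappa+\delta_{g_2,I_2}\) are nef by \cref{prop:semigroup Kappa}. Their restrictions to \(\Delta_{\Gamma_3}\) are not big. For \(L_1\), the factor \(\M{g_1}{I_1+1}\to\M{g_1}{I_1}\) also drops a dimension, so in fact \(e_{L_1}(\Delta_{\Gamma_3})=2\). Then \cref{lem:contracting subvariety} together with \cref{thm:big}(1) puts every \(Z_j\) inside \(\Delta_{g_1,I_1}\cap\Delta_{g_2,I_2}\).

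\textbf{The gap.} You leave the final step open, and the fix you sketch does not work. It is not true that \(\pi_{1,s}\times\pi_{2,s}\) is generically finite on competing codimension-\(2\) strata in \(\Delta_{g_1,I_1}\). It has positive-dimensional fibers over every codimension-\(2\) subvariety lying in \(\Delta_{g_1,I_1}\); this is exactly why \(\mathbb{E}(L_1)=\Delta_{g_1,I_1}\). Moreover, \(e_{L_1}\ge 2\) is also attained by many other subvarieties, for example images of preimages under \(\pi_{1,s}\times\pi_{2,s}\) of divisors on \(\M{g_1}{I_1}\times\M{g-g_1}{I_1^c}\). So \(e_{L_1}\) alone cannot isolate \(\Delta_{\Gamma_3}\). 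Also, \cref{lem:contracting subvariety} gives containment in \(\mathbb{E}(L)\), not merely meeting it.

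\textbf{How to close it.} There is in fact no obstacle: \(\Delta_{g_1,I_1}\cap\Delta_{g_2,I_2}=\Delta_{\Gamma_3}\) set-theoretically.
\begin{enumerate}
\item A curve in the intersection has one separating node cutting off exactly the markings \(I_1\) and another cutting off exactly \(I_2\). These nodes are distinct because \(I_1\ne I_2\) and \(I_1\ne I_2^c\).
\item Contracting all other edges of the dual graph gives a path \(u - v - w\) with these two edges \(uv\) and \(vw\).
\item Any choice other than ``the \(I_1\)-side of \(uv\) is \(\{u\}\) and the \(I_2\)-side of \(vw\) is \(\{w\}\)'' forces \(I_1\subseteq I_2\), \(I_2\subseteq I_1\), or \(I_2\sqcup I_3\subseteq I_2\). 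All three are impossible since the \(I_i\) are nonempty and pairwise disjoint.
\item Hence \(u,w,v\) carry \((g_1,I_1)\), \((g_2,I_2)\), \((g_3,I_3)\), so the contracted graph is \(\Gamma_3\) and the curve lies in \(\Delta_{\Gamma_3}\).
\end{enumerate}
Each \(Z_j\) is therefore a closed subvariety of the irreducible \(\Delta_{\Gamma_3}\) of the same dimension, so \(Z_j=\Delta_{\Gamma_3}\). Extremality follows, with no injectivity needed, as you note.

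\textbf{Comparison with the paper.} Once completed this way, your argument is a genuinely different and more elementary route. The paper uses the single divisor \(\kappa+\delta_{g_1,I_1}+\delta_{g_2,I_2}+\delta_{g_3,I_3}\), whose nefness is imported from \cite[Example 7.7 (2)]{Cho25b}. It shows that the codimension-\(2\) subvarieties with \(e_L\ge 4\) are exactly the three \(\Delta_{\Gamma_i}\). It then eliminates the two unwanted strata by intersecting with the test divisors \(\Delta_{g_2,I_2\sqcup\{p\}}\) and \(\Delta_{g_3,I_3\sqcup\{q\}}\).

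Your version uses only \cref{prop:semigroup Kappa}, which is proved in the paper. It needs no test-divisor computation, because intersecting the two exceptional loci already cuts out the stratum. The paper's version, in exchange, exhibits a more general semigroup kappa divisor at work, which is the stated purpose of this example.
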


\begin{proof}
    The proof is similar to that of \cite[Theorem 5.3]{CC15}; however, they use morphisms, whereas we use nef divisors. Note that, by \cite[Example 7.7 (2)]{Cho25b},
    \[L:=\kappa+\delta_{g_1, I_1}+\delta_{g_2, I_2}+\delta_{g_3, I_3}\]
    is nef. Let 
    \[ \iota_1: \M{g_1}{I_1+2}\times \M{g_2}{I_2+1}\times \M{g_3}{I_3+1}\twoheadrightarrow \Delta_{\Gamma_1}\subseteq \M{g}{n} \]
    be the gluing map corresponding to \(\Delta_{\Gamma_1}\). Then, by direct computation, 
    \[ \iota_1^\ast L= \pi_{1,s}^\ast \kappa + \pi_{2,s}^\ast \kappa+\pi_{3,s}^\ast \kappa \]
    where \(\pi_{i,s}\) is the map that projects onto the \(i\)-th factor and then forgets all markings corresponding to nodes. Then \(\pi_{1,s}\) has relative dimension \(2\), since \(\M{g_1}{I_1+2}\) has two markings corresponding to nodes, while \(\pi_{2,s}\) and \(\pi_{3,s}\) have relative dimension \(1\). Therefore, \(e_L(\Delta_{\Gamma_1})=4\) (cf. \cref{lem:contracting subvariety}), since \(\kappa\) is ample. By symmetry, \(e_L(\Delta_{\Gamma_i})=4\) for every \(i\).

    Now, we prove the claim that the \(\Delta_{\Gamma_i}\) are precisely the codimension \(2\) subvarieties of \(\M{g}{n}\) such that \(e_L(Z)\ge 4\). Let \(Z\) be such a variety. In particular, \(e_L(Z)\ge 1\), so \(L|_Z\) is not big. Therefore, by \cref{thm:big} (1), \(Z\subseteq \Delta_{g_i, I_i}\) for some \(i\). Without loss of generality, assume that \(Z\subseteq \Delta_{g_1, I_1}\). Let 
    \[ \iota_2: \M{g_1}{I_1+1}\times \M{g_2+g_3}{I_2\sqcup I_3 +1}\to \M{g}{n} \]
    be the map corresponding to \(\Delta_{g_1, I_1}\). Let \(Z'\) be a subvariety of the source such that \(\iota_2(Z')=Z\). Since \(\iota_2\) is finite, \(e_{\iota_2^\ast L}(Z')\ge 4\). By direct computation,
    \[ \iota_2^\ast L = \pi_{1,s}^\ast \kappa + \pi_{2,s}^\ast (\kappa+\delta_{g_2, I_2})= (\pi_{1,s}\times \pi_{2,s})^\ast L'.   \]
    where \(L'\) is the corresponding divisor on \(\M{g_1}{I_1}\times \M{g_2+g_3}{I_2\sqcup I_3}\), given by the sum of \(\kappa\) and \(\kappa+\delta_{g_2, I_2}\) on the two factors. Again, \(\pi_{i,s}\) is the map that projects onto the \(i\)-th factor and then forgets all markings corresponding to nodes. Since \(\pi_{1,s}\) and \(\pi_{2,s}\) have relative dimension \(1\),
    \[ e_{L'}((\pi_{1,s}\times \pi_{2,s}) (Z'))\ge e_{\iota_2^\ast L}(Z')-2\ge 2. \]
    In particular, \((\pi_{1,s}\times \pi_{2,s}) (Z')\) is contained in the exceptional locus of \(L'\), which is contained in \(\M{g_1}{I_1}\times \Delta_{g_2, I_2}\). Therefore, \(Z'\) is contained in its inverse image, which is 
    \[ \left( \M{g_1}{I_1+1}\times \M{g_2}{I_2+2}\times \M{g_3}{I_3+1} \right)\cup \left( \M{g_1}{I_1+1}\times \M{g_2}{I_2+1}\times \M{g_3}{I_3+2} \right). \]
    Hence, since \(Z\) has codimension \(2\), it follows that \(Z\) is either \(\Delta_{\Gamma_2}\) or \(\Delta_{\Gamma_3}\). This proves the claim.

    Now, we prove that the \(\Delta_{\Gamma_i}\)'s are extremal. Without loss of generality, assume that \(i=1\). Moreover, assume that 
    \[ [\Delta_{\Gamma_1}]=\sum a_i [Z_i]  \]
    in \(\Eff^2(\M{g}{n})\), for some \(a_i>0\). Then, by \cref{lem:contracting subvariety} and the previous paragraph, the only possibility is
    \[ [\Delta_{\Gamma_1}]=b[\Delta_{\Gamma_2}]+c[\Delta_{\Gamma_3}]. \]
    after canceling \([\Delta_{\Gamma_1}]\) terms. Choose \(p\in I_3\). Then 
    \[ \Delta_{g_2, I_2\sqcup \{p\}}\cdot \Delta_{\Gamma_1}=\Delta_{g_2, I_2\sqcup \{p\}}\cdot \Delta_{\Gamma_2}=0, \Delta_{g_2, I_2\sqcup \{p\}}\cdot \Delta_{\Gamma_3}\ne 0 \]
    so \(c=0\). By the same argument using \(\Delta_{g_3, I_3\sqcup \{q\}}\) for \(q\in I_2\), \(b=0\), which is a contradiction. Therefore, \(\Delta_{\Gamma_1}\) spans an extremal ray.
\end{proof}

%% file: sections/inj.tex
This section proves the injectivity of certain pushforward maps on numerical Chow groups of moduli spaces of curves. These injectivity statements may be of independent interest. The first proposition is again from \cite{Bla22}, stated for the reader's convenience.

\begin{prop}(\cite[Proposition 4.2]{Bla22})\label{prop:genus0 pushforward injective}
    Let \(\iota: \M{0}{I+1}\times \M{g}{I^c+1} \to \M{g}{n} \) be the clutching map, where \( I\subseteq [n] \), \( |I|\ge 2 \). Then
    \[  \iota_\ast :\nc^d(\M{0}{I+1}\times \M{g}{I^c+1})\to \nc^{d+1}(\M{g}{n}) \]
    is injective for any \(d\).
\end{prop}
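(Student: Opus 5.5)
The plan is to turn injectivity of \(\iota_\ast\) into a surjectivity statement for pullbacks, using the projection formula and the perfect pairing that defines numerical equivalence. Write \(W:=\M{0}{I+1}\times \M{g}{I^c+1}\) and let \(\star\) denote the node marking on each factor. Both \(W\) and \(\M{g}{n}\) are smooth proper Deligne--Mumford stacks with projective coarse spaces, so with \(\Q\)-coefficients their operational Chow rings agree with their Chow groups. A class \(\alpha\in \nc^d(W)\) is zero exactly when \(\alpha\cdot\gamma=0\) for all \(\gamma\) of complementary degree. Suppose \(\iota_\ast\alpha=0\) in \(\nc^{d+1}(\M{g}{n})\). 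Then for every \(\beta\) of complementary degree on \(\M{g}{n}\), the projection formula gives
\[ \deg(\iota^\ast\beta\cdot\alpha)=\deg(\beta\cdot\iota_\ast\alpha)=0. \]
It therefore suffices to show that \(\iota^\ast: A^\ast(\M{g}{n})\to A^\ast(W)\) is surjective modulo numerical equivalence, i.e. that the pulled-back classes pair with \(\nc^\ast(W)\) exactly as all of \(A^\ast(W)\) does.

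The first step is a Künneth statement: \(A^\ast(W)=A^\ast(\M{0}{I+1})\otimes A^\ast(\M{g}{I^c+1})\). By Keel's theorem, \(\M{0}{I+1}\) has Chow ring equal to cohomology, generated by boundary divisors, and it admits a stratification by affine spaces (it is a linear variety). Hence the exterior product map \(A_\ast(\M{0}{I+1})\otimes A_\ast(Y)\to A_\ast(\M{0}{I+1}\times Y)\) is an isomorphism for any \(Y\) (Totaro/Kimura). This reduces the problem to producing, in the image of \(\iota^\ast\), the classes \(1\otimes c\) for all \(c\in A^\ast(\M{g}{I^c+1})\) and \(D_J\otimes 1\) for the boundary divisors \(D_J\) of \(\M{0}{I+1}\). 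Since \(\iota^\ast\) is a ring homomorphism, products of these then give everything.

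\emph{Second factor.} Fix \(i_0\in I\) and let \(\pi:\M{g}{n}\to\M{g}{I^c\cup\{i_0\}}\) forget the points of \(I\setminus\{i_0\}\). On the image of \(\iota\), forgetting these points and stabilizing contracts the rational component. The result is the genus \(g\) component with the node replaced by the marking \(i_0\). Thus \(\pi\circ\iota=\mathrm{pr}_2\), after identifying \(\star\) with \(i_0\), so \(\iota^\ast\pi^\ast c=1\otimes c\) for every \(c\).

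\emph{First factor.} Each boundary divisor of \(\M{0}{I+1}\) is \(D_J\) for \(J\subseteq I\) with \(2\le |J|\le |I|-1\), where we choose the side not containing \(\star\). I claim that \(\iota^\ast\delta_{0,J}=D_J\otimes 1\). Since \(J\ne I\), the image of \(\iota\) is not contained in \(\Delta_{0,J}\), so the pullback is the class of the scheme-theoretic intersection. A curve in the image of \(\iota\) lies in \(\Delta_{0,J}\) only if it has a rational tail carrying exactly the markings \(J\). Because \(J\subseteq I\) and all points of \(I\) lie on the genus \(0\) side, such a tail must sit inside the first factor, which gives exactly \(D_J\times\M{g}{I^c+1}\). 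Transversality follows from the local description of the boundary as a normal crossings divisor: the two nodes are distinct, so the smoothing parameters are independent coordinates.

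The step I expect to need the most care is this restriction computation together with the Künneth step, in particular checking that no extra component or excess-intersection term appears in \(\iota^\ast\delta_{0,J}\) (the case \(J=I\) would give a normal bundle term, but it is excluded). With that in place, \(\iota^\ast\) is surjective onto \(A^\ast(W)\). Hence \(\alpha\) pairs to zero with every class on \(W\), so \(\alpha=0\) in \(\nc^d(W)\), proving injectivity.
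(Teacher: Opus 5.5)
Your proof is correct. It also takes a genuinely different route from anything written out in the paper. The paper does not reprove this statement; it cites \cite[Proposition 4.2]{Bla22}.

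\textbf{The paper's analogous argument.} Its own injectivity result, \cref{prop:N1 pushforward injective}, dualizes to \(\iota^\ast:\nc_2(\M{g}{n})\to \nc_1(\M{g_1}{I+1})\times\nc_1(\M{g_2}{I^c+1})\). It then builds, one F-curve at a time, an explicit boundary \(2\)-stratum \(S\) with \(\iota^\ast[S]=c\,[F\times\mathrm{pt}]\). This needs the carefully chosen maximally degenerate points of \cref{lem:maximal degeneration} and a case analysis of the degenerations of the smoothed vertex.

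\textbf{Your route.} You work on the cohomological side and use the fact that \(\iota^\ast\) is a ring homomorphism. Keel's K\"unneth property and the generation of \(A^\ast(\M{0}{I+1})\) by boundary divisors reduce surjectivity of \(\iota^\ast\) to two families of generators. The identity \(\pi\circ\iota=\mathrm{pr}_2\) then handles the whole genus-\(g\) factor at once.

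\textbf{What each approach buys.} Your argument treats every codimension \(d\) simultaneously. The paper's strata-by-strata method only reaches \(\nc^1\) of the source (cf.\ \cref{rmk:inj}). Conversely, your method depends essentially on the genus-\(0\) factor. For \(\M{g_1}{I+1}\) with \(g_1\ge 1\), neither a Chow-level K\"unneth decomposition nor generation by divisors is available in general. That is why the paper argues with explicit strata there, and uses \cite{Mor01} only for the \(\nc^1\) K\"unneth statement.

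\textbf{A simplification.} The step you flag as delicate is easier than you expect. The image of \(\iota\) is not contained in \(\Delta_{0,J}\), and the preimage is the irreducible divisor \(D_J\times\M{g}{I^c+1}\). Hence \(\iota^\ast\delta_{0,J}\) is automatically a positive multiple of \(D_J\otimes 1\). With \(\Q\)-coefficients, a positive multiple is all your surjectivity argument needs. Transversality does hold, since a curve in \(\iota(W)\) has at most one node of type \((0,J)\): two such nodes would bound an unmarked genus-\(0\) subcurve with only two special points. But transversality only fixes the multiplicity at \(1\).
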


\begin{prop}\label{prop:N1 pushforward injective}
    Let \(\iota: \M{g_1}{I+1}\times \M{g_2}{I^c+1} \to \M{g}{n} \) be the clutching map, where \( g_1+g_2=g \). Let \(n_1=|I|, n_2=|I^c|\) and assume the following condition for \( \{i,j\}=\{1,2\} \):
    \[ \text{If }g_i\le g_j, \text{ then }n_i\ge 1. \]
    Then
    \[  \iota_\ast :\nc^1(\M{g_1}{I+1}\times \M{g_2}{I^c+1})\to \nc^2(\M{g}{n}) \]
    is injective.
\end{prop}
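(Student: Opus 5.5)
The plan is to dualize, as the introduction suggests. Injectivity of
\[ \iota_\ast :\nc^1(\M{g_1}{I+1}\times \M{g_2}{I^c+1})\to \nc^2(\M{g}{n}) \]
is equivalent to surjectivity of the transposed map
\[ \iota^\ast :\nc_2(\M{g}{n})\to \nc_1(\M{g_1}{I+1}\times \M{g_2}{I^c+1}). \]
Here \(\nc_2(\M{g}{n})\) is dual to \(\nc^2\) by definition of numerical equivalence. The map \(\iota^\ast\) is pullback of dimension-\(2\) classes: it sends a class \(\beta\) to the functional \(D\mapsto \iota_\ast D\cdot \beta\). So it suffices to show that every curve class on the product \(P:=\M{g_1}{I+1}\times \M{g_2}{I^c+1}\) is realized in this way by surfaces in \(\M{g}{n}\). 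Concretely, we need: for every \(D\in \nc^1(P)\) with \(\iota_\ast D=0\) in \(\nc^2(\M{g}{n})\), we have \(D=0\).

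\textbf{Step 1: spanning classes on \(P\).} I would use that \(\nc_1\) of each factor \(\M{g_i}{n_i+1}\) is spanned by F-curves, via the known descriptions of \(\mathrm{Pic}\) and the F-conjecture-independent fact that F-curves span \(\nc_1\). It follows that \(\nc_1(P)\) is spanned by classes of the form \(C\times \mathrm{pt}\) and \(\mathrm{pt}\times C'\), where \(C,C'\) are F-curves in the two factors.

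\textbf{Step 2: lifting each spanning class.} For each such generator, I would produce a surface \(S\subseteq \M{g}{n}\) with \(\iota^\ast[S]\) equal to it, up to classes already handled. The natural choice is the image of \(C\times B\), where \(B\) is a curve in the second factor. It must be chosen so that \(\iota_\ast D\cdot[\iota(C\times B)]\) only sees \(D\cdot[C\times\mathrm{pt}]\) times a nonzero constant. A boundary-stratum calculation handles this: take \(S\) to be a \(2\)-dimensional boundary stratum of \(\M{g}{n}\) whose dual graph contains the edge defining \(\Delta_{g_1,I}\), together with one extra degree of freedom moving transversally. That extra freedom is, for example, a genus-\(0\) component carrying a marked point from \(I\) (which requires \(n_1\ge1\)), or a genus-\(1\) tail moving in \(\M{1}{1}\).

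To compute \(\iota_\ast D\cdot S\), I would use the excess intersection formula for \(\iota^\ast[\Delta_\Gamma]\). It is a sum over ways of realizing \(\Gamma\) as a specialization of the dual graph of \(\Delta_{g_1,I}\), with an excess term \(-\psi_\star-\psi_\bullet\) at the node when the distinguished edge is the clutching edge. The hypothesis ``if \(g_i\le g_j\) then \(n_i\ge 1\)'' should be exactly what rules out the automorphism, or symmetric, case \(g_1=g_2\), \(I=\emptyset\). It also guarantees enough markings to build the auxiliary moving component on the correct side. This condition ensures the edge is distinguished, so unwanted contributions from the other half, or from swapping the sides, do not appear.

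\textbf{Step 3: triangular argument.} I would order the generators so that the lifts produce a triangular matrix with nonzero diagonal entries, including the \(\psi\)-excess contributions. Surjectivity then follows.

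The main obstacle is Step 2. The excess term \(-\psi_\star-\psi_\bullet\) restricted to curves, and the extra terms where the other edges of \(\Gamma\) play the role of the clutching edge, must be controlled. The plan is to first treat classes in \(\nc_1\) detected by \(\psi_\star\) and \(\psi_\bullet\) themselves, using F-curves of type \(F_1\)/\(F_3\) lifted to strata. These strata should meet \(\Delta_{g_1,I}\) only along the desired edge, and checking that this is possible is precisely where the hypotheses on \(n_i\) are used. After that, I would show that the remaining generators differ from honest lifts only by combinations of these.
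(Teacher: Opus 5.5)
Your dualization and your use of F-curves match the paper. Your Step 1 also silently uses that \(\nc^1\) of the product splits as the sum of the factors' \(\nc^1\); the paper cites Moriwaki for this. The real gap is Step 2, and the mechanism you propose there provably cannot give surjectivity.

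Your surfaces \(S=\iota(C\times B)\) lie inside \(\Delta_{g_1,I}\). Under the hypothesis, the clutching edge is the only edge of type \((g_1,I)\) in any stable graph containing it. A second such edge would either cut off an unmarked genus-\(0\) piece with two external half-edges, which is unstable, or it would force \(I=\emptyset\) with \(g_1\le g_2\), or \(I^c=\emptyset\) with \(g_2\le g_1\). So your ``extra terms'' never occur, and the excess formula gives exactly
\[ \iota^\ast[\iota(C\times B)]=-(\psi_\star\cdot C)\,[\mathrm{pt}\times B]-(\psi_\bullet\cdot B)\,[C\times \mathrm{pt}]. \]
Every such class pairs to zero with \(D_0:=\pi_1^\ast\psi_\star-\pi_2^\ast\psi_\bullet\). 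Hence the pullbacks of your product strata span at most a hyperplane of \(\nc_1(\M{g_1}{I+1}\times\M{g_2}{I^c+1})\), and no ordering yields a triangular matrix with nonzero diagonal.

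Several of your phrases also don't fit this setup. ``One extra degree of freedom moving transversally'' and ``meet \(\Delta_{g_1,I}\) only along the desired edge'' make no sense for surfaces contained in \(\Delta_{g_1,I}\). ``First treat classes detected by \(\psi\)'' never identifies a surface whose pullback leaves this hyperplane.

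Your route can be rescued as follows. Reduce to \(g_1,g_2\ge 1\) via the genus-\(0\) case of Blankers; there the hypothesis excludes \(\M{1}{1}\)-factors, so both factors have dimension at least \(2\). Then add the classes \(\iota_\ast(\Sigma\times\mathrm{pt})\), whose pullback is \(-(\psi_\star\cap[\Sigma])\times\mathrm{pt}\). Choose \(\Sigma\) with \(\psi_\star^2\cdot\Sigma\neq0\), for example \(\Sigma=\psi_\star^{\dim-2}\cap[\M{g_1}{I+1}]\), which works because \(\int\psi^{\dim}>0\). A rank count then closes the argument.

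The paper avoids excess intersection altogether by taking \(S\not\subseteq\Delta_{g_1,I}\). It glues a \(1\)-stratum representing \(F\) to a \(0\)-stratum \(C_{\mathrm{pt}}\) of the other factor and then smooths the clutching node. The point \(C_{\mathrm{pt}}\) is chosen so that every separating node of \(C_{\mathrm{pt}}\) cuts off a genus-\(0\) piece carrying only markings of \(I^c\).

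The resulting \(2\)-stratum meets \(\Delta_{g_1,I}\) properly, and only along \(\iota(F\times C_{\mathrm{pt}})\). Any degeneration into \(\Delta_{g_1,I}\) must split the merged genus-\(0\) vertex, which has valence \(4\) or \(5\). The two half-edges coming from \(C_{\mathrm{pt}}\) must stay together. A third half-edge on their side would make a subgraph strictly containing the dual graph of \(C_{\mathrm{pt}}\) the \((g_1,I)\)-side, which ``\(g_2>g_1\) or \(n_2\ge 1\)'' forbids. Hence \(\iota^\ast[S]=c\,[F\times\mathrm{pt}]\) with \(c>0\) directly, with no triangular bookkeeping. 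This also shows that the hypothesis does more than exclude the symmetric case \(g_1=g_2\), \(I=\emptyset\).
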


Note that the condition in \cref{prop:N1 pushforward injective} is exactly the same as that in \cref{prop:semigroup Kappa}. We write it in this form here because we work extensively with both \(\M{g_1}{I+1}\) and \(\M{g_2}{I^c+1}\), and so we present it symmetrically.

\begin{proof}
    By \cref{prop:genus0 pushforward injective}, we may assume that \(g_1, g_2\ge 1.\)

    As explained in \cite[proof of Lemma 7.4]{Cho25b}, using \cite[Theorem 0.1]{Mor01}, we obtain
    \[ \nc^1(\M{g_1}{I+1}\times \M{g_2}{I^c+1} )\simeq \nc^1(\M{g_1}{I+1})\times \nc^1(\M{g_2}{I^c+1}). \]
    By taking the dual of \(\iota_\ast\), we obtain
    \[ \iota^\ast : \nc_2(\M{g}{n})\to \nc_1(\M{g_1}{I+1})\times \nc_1(\M{g_2}{I^c+1})\]
    and it is enough to show that this is surjective.

    Note that for any \(g,n\), \( \nc_1(\M{g}{n}) \) is generated by classes of F-curves (see, e.g. \cite{GKM02}). Therefore, it suffices to produce a boundary \(2\)-stratum in \( \M{g}{n} \) whose pullback under \( \iota\) gives the desired F-curves. We will prove the following assertion: If \( n_2\ge 1 \) or \(g_2>g_1\), then for any F-curve \(F\) on \( \M{g_1}{n_1+1} \), there exists a boundary \(2\)-stratum \( S \subset \M{g}{n} \) such that
    \begin{equation}\label{eqn:fcurve}
        \iota^\ast [S] = c\cdot [F \times \mathrm{pt}]
    \end{equation}
    for some \(c>0\). By applying this to both \(\M{g_1}{I+1}\) and \(\M{g_2}{I^c+1}\), we obtain the theorem.

    The procedure is as follows. First, choose a representative of \(F\) exhibiting maximal degeneration. Here, maximal degeneration means being itself a boundary \(1\)-stratum, not merely numerically equivalent to one. Then, choose a maximally degenerated curve \(C_{\text{pt}}\), i.e. a boundary \(0\)-stratum, corresponding to \(\mathrm{pt}\). The image of \(F \times \mathrm{pt}\) under \(\iota\) corresponds to the boundary stratum obtained by attaching the \(1\)-stratum corresponding to \(F\) to the \(0\)-stratum corresponding to \(C_{\text{pt}}\). Next, smooth the attached node to construct a boundary \(2\)-stratum \(S\) in \(\M{g}{n}\). If we choose the representatives \(F\) and \(C_{\text{pt}}\) carefully, then \(S\) is not entirely contained in the image of \(\Delta_{g_1, I}\), so we can compute \(\iota^\ast [S]\) by intersecting \(S\) with \(\Delta_{g_1, I}\) up to multiplicity. This corresponds to the degenerations of \(S\) contained in \(\Delta_{g_1, I}\), and we can show that this always gives \([F \times \mathrm{pt}]\). 
    
    The following figure schematically illustrates the construction for \(\M{3}{3}\). 
    
    \begin{figure}[H]
    \centering
        \includegraphics[angle=0, width=0.8\textwidth]{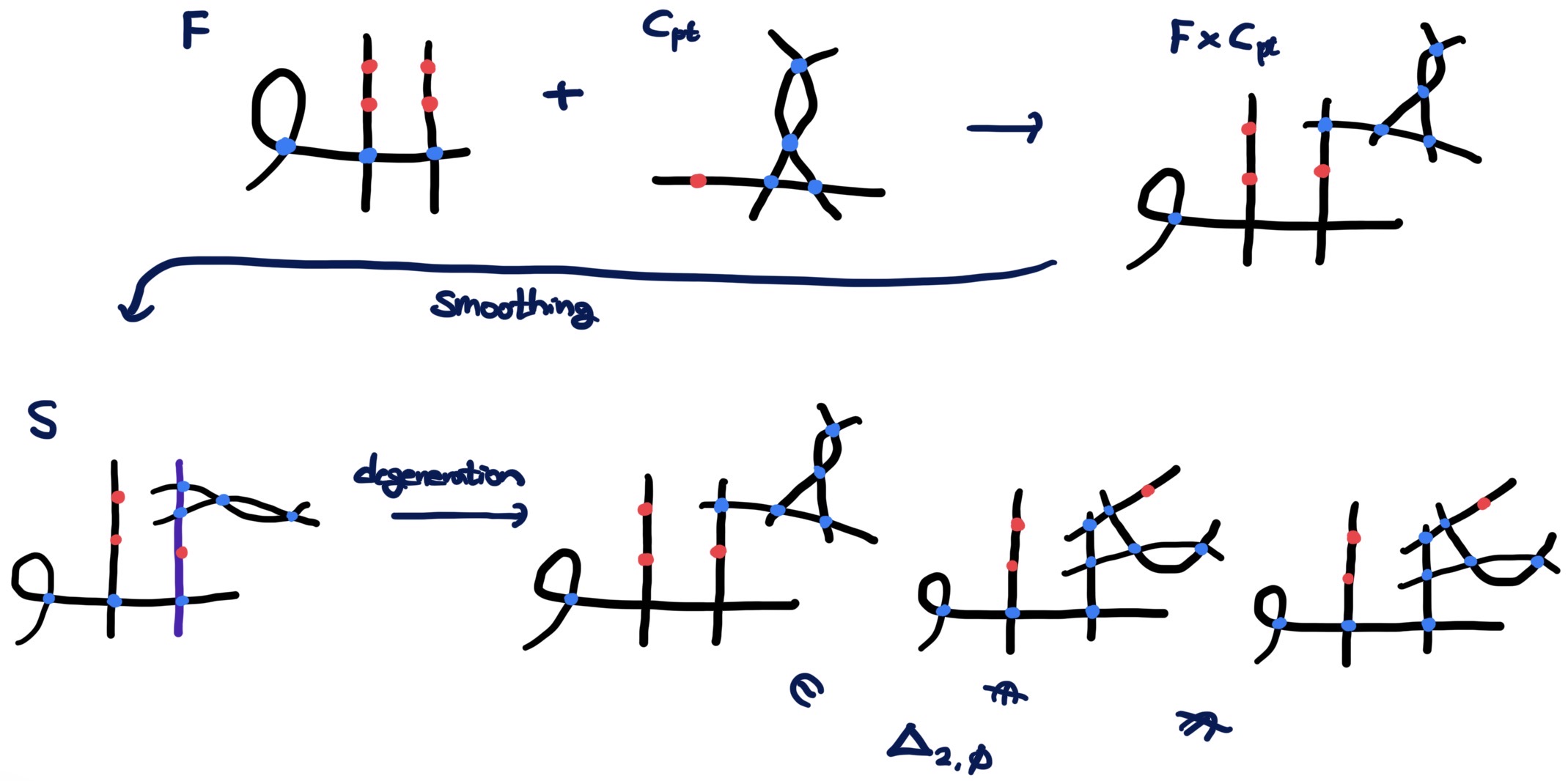}
        \caption{Diagrammatic description. Blue: nodes. Red: marked points.}
        \label{fig:1}
    \end{figure}

    Note that we draw only the degeneration of one of the genus \(0\) curves with \(4\) special points, since the degeneration of the other such curve cannot intersect \(\Delta_{2, \emptyset}\) because of the red dot (a marked point) on the right. This is a typical phenomenon: the only relevant degeneration occurs at the smoothed component.

    Strictly speaking, choosing a suitable representative of \(C_{\text{pt}}\) should not affect the conclusion, since all such representatives are numerically equivalent. However, some choices of \(C_{\text{pt}}\) may produce an \(S\) contained in \(\Delta_{g_1, I}\), so the pullback computation may involve excess intersection, adding further complications. Therefore, for the sake of a simpler proof, it is important to choose the representative carefully; this is ensured by the following lemma.

    \begin{lem}\label{lem:maximal degeneration}
        For \(g,n\ge 1\), there exists a boundary \(0\)-stratum of \(\M{g}{n}\) such that every separating node of the corresponding stable curve splits the curve into a genus \(0\) component that carries only markings from \([n-1]\) and a genus \(g\) component containing the \(n\)th marked point.
    \end{lem}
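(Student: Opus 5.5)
We construct the boundary \(0\)-stratum explicitly as a dual graph. A \(0\)-stratum corresponds to a stable graph in which every vertex has genus \(0\) and exactly three half-edges, and the separating nodes are exactly the bridges of the graph. The requirement is that, for every bridge \(e\), one side of \(e\) is a tree of genus \(0\) vertices carrying only legs from \([n-1]\), and the other side carries all of the genus (\(h^1=g\)) together with the leg \(n\).

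The construction starts from a trivalent graph \(\Gamma_0\) of genus \(g\) that is \(2\)-edge-connected (bridgeless) and has exactly one leg, labeled \(n\).
\begin{itemize}
\item For \(g=1\), take one vertex with a self-loop and the leg \(n\).
\item For \(g\ge 2\), take a ``necklace'': a cycle of \(2g-1\) vertices in which consecutive vertices are joined, chosen so that the loop-genus is \(g\) and one vertex carries leg \(n\).
\end{itemize}
A concrete choice is a cycle \(v_1,\dots,v_{2g-2}\) with single and double edges alternating, plus one extra vertex inserted on an edge to carry leg \(n\). Concretely: take \(g-1\) ``banana'' pairs (two vertices joined by a double edge, giving genus \(1\) each), arrange them in a cycle joined by single edges (one more genus), and subdivide one cycle edge with a vertex carrying leg \(n\). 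This is trivalent, has \(h^1=g\), and is bridgeless, since every edge lies on a cycle. For \(g=1\) we use the self-loop vertex with leg \(n\), which has valence \(3\) and no bridges.

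Next, attach the remaining markings \(1,\dots,n-1\). Subdivide any edge of \(\Gamma_0\) with a new vertex \(w\), and hang a rational tail at \(w\): a trivalent genus-\(0\) tree (a caterpillar) whose leaves are the legs \(1,\dots,n-1\). For \(n-1=1\), the leg \(1\) attaches directly to \(w\) with no bridge. For \(n-1\ge 2\), use a chain of genus-\(0\) vertices, each carrying one leg, ending in a vertex carrying two legs, so that every vertex is trivalent and every vertex has genus \(0\). The result is a stable graph of genus \(g\) with \(n\) legs, all vertices genus \(0\) and trivalent, and hence a \(0\)-stratum. The first step in checking the separating-node condition is that the edges of \(\Gamma_0\), including the two halves of the subdivided edge, remain non-bridges: subdivision preserves lying on a cycle, and attaching a tree at \(w\) creates no new cycles through them.

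Therefore the bridges are exactly the edges of the attached tree, including the edge joining the tree to \(w\). Removing such an edge leaves one side that is a subtree: it has genus \(0\), contains only legs from \([n-1]\), and contains no vertex of \(\Gamma_0\). The other side contains \(\Gamma_0\), hence all of the genus \(g\) and the leg \(n\).

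The main (mild) obstacle is the bookkeeping: checking bridgelessness and trivalence of \(\Gamma_0\) in the small cases \(g=1,2\), and checking the degenerate case \(n=1\), where no tree is attached and there are no separating nodes at all, so the statement holds vacuously.
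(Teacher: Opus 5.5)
Your proof is correct and follows essentially the same route as the paper, which exhibits the $0$-stratum through pictures: genus is added two rational components at a time, matching your banana pairs on a bridgeless core carrying the $n$th point, and further markings are added as a chain of rational components, matching your tail. Your version makes the verification explicit in dual-graph language, by identifying separating nodes with bridges and checking that the only bridges are the edges of the attached tail.
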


    \begin{proof}
        This is evident from the following figures.

        \begin{figure}[H]
        \centering
        \includegraphics[angle=0, width=0.6\textwidth]{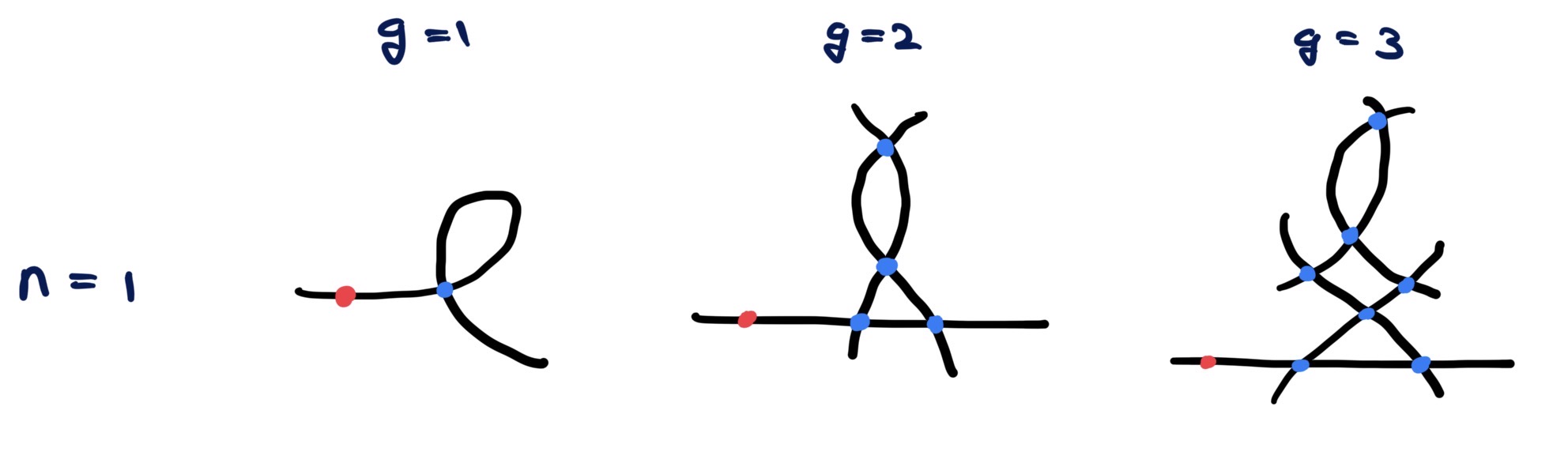}
        \caption{The case \(n=1\).}
        \end{figure}

        \begin{figure}[H]
        \centering
        \includegraphics[angle=0, width=0.6\textwidth]{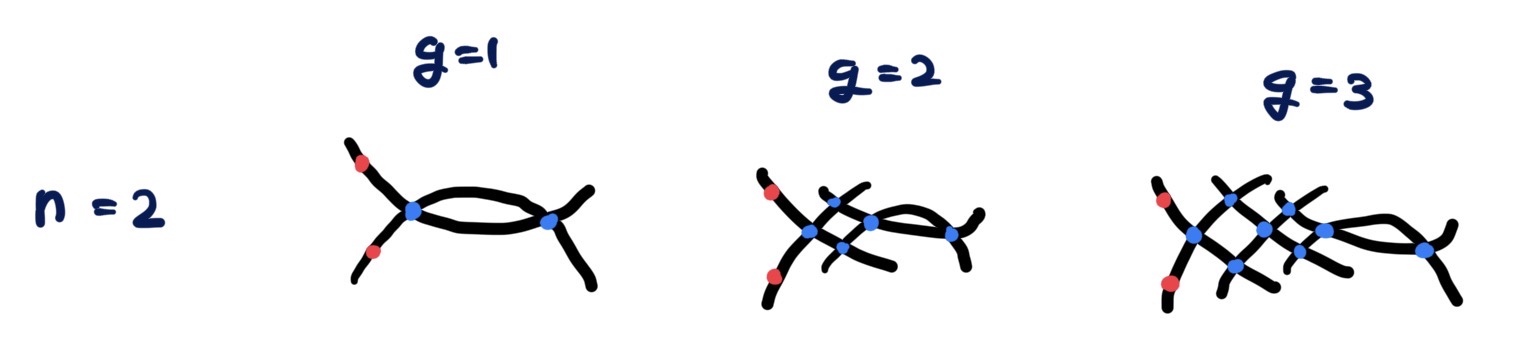}
        \caption{The case \(n=2\).}
        \end{figure}

        \begin{figure}[H]
        \centering
        \includegraphics[angle=0, width=0.6\textwidth]{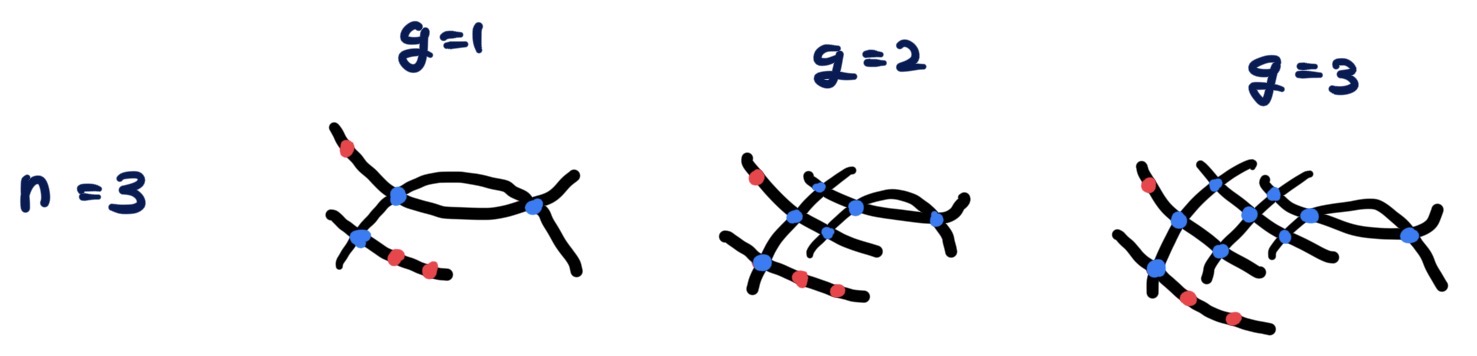}
        \caption{The case \(n=3\).}
        \label{fig:4}
        \end{figure}

        The general case is obtained simply by increasing the genus and adding more marked points. More precisely, to increase the genus, we can keep adding two components for each additional genus, as in the figures above. To increase the number of marked points by one, we attach an additional rational component carrying the new marking to the lower-left component of \cref{fig:4}, in the same pattern.
    \end{proof}

    We now give a rigorous justification of the procedure pictured in \cref{fig:1}. Choose any boundary \(1\)-stratum representing \(F\), and choose a boundary \(0\)-stratum \(C_{\text{pt}}\) as in \cref{lem:maximal degeneration}. Consider the boundary \(1\)-stratum \(\iota(F\times C_{\text{pt}})\), and obtain \(S\) by smoothing the attached node. This can be made more precise using the language of dual graphs. Let \(\Gamma_1\) be the dual graph of \(F\), \(\Gamma_2\) be the dual graph of \(C_{\text{pt}}\), and let \(p_1\) (resp. \(p_2\)) be the \((n_i+1)\)st point of \(\M{g_1}{I+1}\) (resp. \(\M{g_2}{I^c+1}\)), i.e. the attaching marked point. Then we can construct \(\Gamma_0\) as the dual graph obtained by attaching \(\Gamma_1\) and \(\Gamma_2\) at the legs \(p_1\) and \(p_2\). Let \(e_0\) be the edge of \(\Gamma_0\) joining \(p_1\) and \(p_2\), let \(\Gamma\) be the dual graph obtained by contracting \(e_0\), and let \(v\) denote the resulting vertex of the contraction. Then \(S\) is the boundary \(2\)-stratum corresponding to \(\Gamma\).

    First, we need to check that \(S\not\subseteq \Delta_{g_1, I}\). If \(S\subseteq \Delta_{g_1, I}\), then \(\Gamma\) must have an edge \(e\) that separates the curve into two parts corresponding to \(\M{g_1}{I+1}\) and \(\M{g_2}{I^c+1}\). By construction, any edge of \(\Gamma\) is either an edge of \(\Gamma_1\) or an edge of \(\Gamma_2\). If \(e\in E(\Gamma_2)\), then \(\Gamma-e\) consists of a subgraph of \(\Gamma_2\) and another subgraph containing \(\Gamma_1\). However, by \cref{lem:maximal degeneration}, that subgraph of \(\Gamma_2\) has genus \(0\) and carries only markings in \(I^c\), since the last marked point is the attaching node. Therefore, this is impossible. If \(e\in E(\Gamma_1)\), then \(\Gamma-e\) consists of a subgraph of \(\Gamma_1\) and another subgraph properly containing \(\Gamma_2\). Hence, the subgraph strictly containing \(\Gamma_2\) must correspond to \(\M{g_1}{I+1}\), but this is impossible since \(g_2>g_1\) or \(n_2\ge 1\).

    We now prove \cref{eqn:fcurve}, which amounts to proving that the only codimension \(1\) degeneration of \(S\) lying in \(\Delta_{g_1, I}\) is \(\Delta_{\Gamma_0}\). By the same argument as in the previous paragraph, it is clear that we must degenerate \(v\), and that the edge \(e\) arising from this degeneration should correspond to the separating node. This is the typical phenomenon mentioned right before \cref{fig:1}. 
    
    The vertex \(v\) has genus \(0\) or \(1\) by the classification of F-curves; see, for example, \cite[Section 2]{GKM02} or \cite[Section 2]{Cho25b}. If \(g(v)=1\), then the only possibility is \(\M{g_1}{I+1}=\M{1}{1}\), but this is impossible by the hypothesis. Therefore, we may assume that \(g(v)=0\). Then the degree of \(v\) is \(4\) or \(5\), and to degenerate it we need to split its \(4\) or \(5\) half-edges into two groups. Note that two of its half-edges come from \(\Gamma_2\), which we denote by \(h_1\) and \(h_2\). It is enough to show that the only possible splitting of the half-edges of \(v\) is into \(\{h_1, h_2\}\) and the remaining half-edges. By the construction of \(\Gamma_2\) in \cref{lem:maximal degeneration}, if \(h_1\) and \(h_2\) lie in different groups, then \(e\) is not even a separating node. Hence \(h_1\) and \(h_2\) must lie in the same group, which proves the degree \(4\) case. If the degree is \(5\), then there is the additional possibility that the group containing \(h_1\) and \(h_2\) also contains a third half-edge \(h\). However, in this case, if we let \(\Gamma'\) be the dual graph of the degeneration, then \(\Gamma'-e\) contains a dual graph strictly containing \(\Gamma_2\), and hence this corresponds to \(\M{g_1}{I+1}\). This is impossible, again, since \(g_2>g_1\) or \(n_2\ge 1\). This finishes the proof of \cref{eqn:fcurve}.
\end{proof}

\begin{rmk}\label{rmk:inj}
    \begin{enumerate}
    \item For \(\iota:\M{g}{1}\times \M{g}{1}\to \Mg{2g}\), \cref{prop:N1 pushforward injective} does not hold, since \(\iota\) factors through \( \left( \M{g}{1}\times \M{g}{1} \right)/\Z_2 \). Therefore, \cref{prop:N1 pushforward injective} does not cover the case of \( \iota:\M{i}{1}\times \M{g-i}{[n]+1}\to \M{g}{n}  \) when \(2i\le g\). The author does not know whether \cref{prop:N1 pushforward injective} also holds in this case.
    \item The injectivity of pushforward along \(\iota\) for higher-codimension cycles is the bottleneck of the method used in this paper. Improving this part would directly strengthen \cref{thm:main theorem 1} (1). For example, if the analogue of \cref{prop:N1 pushforward injective} held for codimension \(2\) classes on the source, then for \(k\ge 3\) we can improve the bound to \(n\ge 2k-3\).
    \item There are some obstructions to extending \cref{prop:N1 pushforward injective} to higher codimension. One of the most important is the existence of non-tautological algebraic classes \cite{GP03, vZel18, ACC+25}. However, since every even cohomology class of \(\M{1}{n}\) is tautological \cite{Pet14}, it might be possible to extend \cref{prop:N1 pushforward injective} to higher codimension to some extent.
    \end{enumerate}
\end{rmk}

%% file: sections/ext.tex
This section contains the proofs of the main theorems, \cref{thm:main theorem 1} and \cref{thm:main theorem 2}. These proofs use the following lemma.

\begin{lem}\label{lem:thm11}
    Let \(\iota:\M{i}{I+1}\times \M{g-i}{I^c+1}\to \M{g}{n}\) be a clutching map satisfying the condition of \cref{prop:semigroup Kappa} (or equivalently, the assumption of \cref{prop:N1 pushforward injective}), and let \(\pi_{1,s}\) and \(\pi_{2,s}\) be the projections defined in \cref{eqn:proj}. If \(Z\subseteq \M{i}{I+1}\times \M{g-i}{I^c+1}\) is a subvariety such that 
    \begin{enumerate}
        \item \(Z\) is a divisor on \(\M{i}{I+1}\times \M{g-i}{I^c+1}\) or \(i=0\), and
        \item \( [Z] \) spans an extremal ray of \(\Eff^k(\M{i}{I+1}\times \M{g-i}{I^c+1})\), and
        \item \( \dim \left(\pi_{1,s}\times \pi_{2,s} \right)(Z)<\dim Z \),
    \end{enumerate}
    then \([\iota(Z)]\) spans an extremal ray of \( \Eff^{k+1}( \M{g}{n} ) \).
\end{lem}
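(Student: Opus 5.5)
We will apply \cref{lem:main strategy} to the clutching map \(\iota\colon W:=\M{i}{I+1}\times \M{g-i}{I^c+1}\to \M{g}{n}\) and the extremal cycle \([Z]\in\Eff^k(W)\). This requires verifying the two hypotheses of that lemma: injectivity of \(\iota_\ast\) on the relevant numerical group, and confinement of the summands of any effective decomposition of \([\iota(Z)]\) to \(\iota(W)=\Delta_{i,I}\).

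\emph{Injectivity.} Hypothesis (1) of the statement is designed for this step.
\begin{itemize}
\item If \(i=0\), \cref{prop:genus0 pushforward injective} gives injectivity of \(\iota_\ast\colon\nc^k(W)\to\nc^{k+1}(\M{g}{n})\) in every codimension. The condition of \cref{prop:semigroup Kappa} forces \(|I|\ge 2\) here: \(I=\emptyset\) would require \(0>g\), and \(|I|=1\) would give an unstable \(\M{0}{2}\).
\item If \(Z\) is a divisor, then \(k=1\), and \cref{prop:N1 pushforward injective} gives injectivity of \(\iota_\ast\colon\nc^1(W)\to\nc^2(\M{g}{n})\). Its hypothesis is exactly the assumed condition.
\end{itemize}

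\emph{Confinement.} Let \(L=\kappa+\delta_{i,I}\), which is nef by \cref{prop:semigroup Kappa}. Since \(\iota\) is finite onto \(\Delta_{i,I}\), we have \(\iota_\ast(c_1(\iota^\ast L)^l\cap[Z])=c_1(L)^l\cap \iota_\ast[Z]\). Moreover, \(\iota_\ast[Z]\) is a positive multiple of \([\iota(Z)]\) (the degree of \(Z\to\iota(Z)\)). Hence \(L|_{\iota(Z)}\) is big if and only if \(\iota^\ast L|_Z\) is big.

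By \cref{eqn:Kappa}, \(\iota^\ast L=\pi_{1,s}^\ast\kappa+\pi_{2,s}^\ast\kappa=(\pi_{1,s}\times\pi_{2,s})^\ast(\kappa\boxtimes\kappa)\), the pullback of an ample class. By \cref{prop:compatible}, \(e_{\iota^\ast L}(Z)=\dim Z-\dim(\pi_{1,s}\times\pi_{2,s})(Z)\), and this is \(\ge 1\) by hypothesis (3). Thus \(L|_{\iota(Z)}\) is not big.

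Now suppose \([\iota(Z)]=\sum a_j[Z_j]\) with \(a_j>0\). By the last assertion of \cref{lem:contracting subvariety}, each \(Z_j\subseteq\mathbb{E}(L)\). Since \(L=\kappa+\delta_{i,I}\) with \(\kappa\) ample, \cref{thm:big} (1) gives \(\mathbb{E}(L)\subseteq\Delta_{i,I}=\iota(W)\). This is hypothesis (2) of \cref{lem:main strategy}, which then shows that \([\iota(Z)]\) is extremal.

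The main point requiring care is the bigness comparison between \(Z\) and \(\iota(Z)\) through the finite map \(\iota\). This follows from the projection formula together with the fact that a top self-intersection of a nef class is a nonnegative number that is multiplied by the positive degree. The rest is assembling earlier results. One should also check that \(\iota\) may be generically of degree \(>1\) onto its image (for instance, an automorphism swapping factors); this is harmless, since \cref{lem:main strategy} only uses \([\iota(Z)]\) up to a positive multiple.
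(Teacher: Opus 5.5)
Your proof is correct and follows the paper's argument essentially step for step: you apply \cref{lem:main strategy}, get injectivity from \cref{prop:genus0 pushforward injective} and \cref{prop:N1 pushforward injective}, and obtain confinement from the nef divisor \(L=\kappa+\delta_{i,I}\) via the last assertion of \cref{lem:contracting subvariety} and \cref{thm:big} (1). Your explicit comparison of the bigness of \(\iota^\ast L|_Z\) and \(L|_{\iota(Z)}\) through the finite map \(\iota\), using the projection formula and \cref{prop:compatible}, fills in a step the paper passes over quickly.
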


\begin{proof}
    We apply \cref{lem:main strategy} to \(\iota\) with \(k:=\) the codimension of \(Z\) in \(\M{i}{I+1}\times \M{g-i}{I^c+1}\). \cref{lem:main strategy} (1) follows from \cref{prop:genus0 pushforward injective} and \cref{prop:N1 pushforward injective}. We now verify condition (2) of \cref{lem:main strategy}. Combined with assumption (2) of the present lemma, this implies that \(\iota(Z)\) is extremal. Let
    \[  [\iota(Z)]=\sum_{j} a_j[Z_j] \]
    where \(a_j>0\) and \(Z_j\subseteq \M{g}{n}\) are subvarieties of \(\M{g}{n}\). We need to prove that \(Z_j\subseteq\Delta_{i, I}=\) image of \(\iota\). Let \(L=\kappa+\delta_{i, I}\). Then \(L\) is nef by \cref{prop:semigroup Kappa}, so we can apply \cref{lem:contracting subvariety}. By \cref{eqn:Kappa}, \( \iota^\ast L=\pi_{1,s}^\ast \kappa+\pi_{2,s}^\ast \kappa \). Hence
    \[ \iota^\ast L|_{Z}=(\pi_{1,s}^\ast \kappa+\pi_{2,s}^\ast \kappa)|_Z=(\pi_1^\ast\kappa+\pi_2^\ast \kappa)|_{\left(\pi_{1,s}\times \pi_{2,s} \right)(Z)} \]
    where \(\pi_1\) and \(\pi_2\) are the projections on \( \M{i}{I}\times \M{g-i}{I^c} \). By (3), this implies that \(L\) is not big on \(\iota(Z)\). Therefore, by the last assertion of \cref{lem:contracting subvariety}, the \(Z_j\) are contained in the exceptional locus \(\mathbb{E}(L)\) of \(L=\kappa+\delta_{i, I}\). Since \(\kappa\) is ample, \cref{thm:big} (1) implies that \(\mathbb{E}(L)\subseteq \Delta_{i,I}\). Hence \(Z_j\subseteq\Delta_{i, I}\), which completes the proof. 
\end{proof}

\begin{proof}[Proof of \cref{thm:main theorem 1} (1)]
    First, consider the case \(k=2\). Let \(D\) be an extremal effective divisor on \(\M{1}{n+1}\), where \(n\ge 2\). Then, by \cref{lem:extremal product everything}, \(Z_D:=D\times \M{g-1}{1}\) is an extremal effective divisor on \(\Delta_{1, [n]}\). Since \(g\ge 3\), we can apply \cref{lem:thm11}. Because of the factor \(\M{g-1}{1}\), we have \(\dim \left(\pi_{1,s}\times \pi_{2,s} \right)(Z_D)<\dim Z_D \). Therefore, by \cref{lem:thm11}, \([\iota(Z_D)]\) spans an extremal ray of \(\Eff^2(\M{g}{n})\). By \cite[Theorem 1.1]{CC14}, there are infinitely many extremal effective divisors on \(\M{1}{n+1}\) for \(n\ge 2\). Moreover, by \cref{prop:N1 pushforward injective}, the classes \([\iota(Z_D)]\) are distinct for distinct classes of \(D\). Therefore, \( \M{g}{n} \) has infinitely many extremal effective codimension \(2\)-cycles.

    Now we argue by induction. Assume that the statement holds for \(k\), and consider the case \(k+1\). The proof is similar to the case \(k=2\), but we use \(\iota:\M{g}{n-2}\times \M{0}{4}\to \M{g}{n}\) instead of \( \M{1}{n+1}\times \M{g-1}{1}\to \M{g}{n} \). For any \(n\ge 2k\), by the induction hypothesis, \(\M{g}{n-2}\) admits infinitely many extremal effective codimension \(k\)-cycles. Let \(D\) be any such cycle. Then, by \cref{lem:extremal product everything}, \(Z_D:= D\times \M{0}{4}\) is an extremal effective codimension \(k\)-cycle on \(\M{g}{n-2}\times \M{0}{4}\). Since the second factor has genus \(0\), we can apply \cref{lem:thm11}, and we obtain that \(\iota(Z_D)\) is an extremal effective codimension \(k+1\)-cycle on \(\M{g}{n}\). By \cref{prop:genus0 pushforward injective}, these cycles are distinct for distinct classes of \(D\). Hence, there are infinitely many extremal effective codimension \(k+1\)-cycles on \(\M{g}{n}\).
\end{proof}

\begin{rmk}\label{rmk:semiample}
    In the proof of \cref{thm:main theorem 1} (1), we used the nef divisor \(\kappa+\delta_{1, [n]}\). This illustrates why it is essential to extend \cite[Proposition 2.1]{CC15}, which is formulated in terms of morphisms, to \cref{lem:contracting subvariety}, which is formulated in terms of nef line bundles.

    By \cref{eqn:Kappa}, if \(\kappa+\delta_{1, [n]}\) is semiample, then the corresponding morphism \(f_1:\M{g}{n}\to X\) to a projective variety \(X\) is an isomorphism outside \(\Delta_{1, [n]}\) and agrees with \(\pi_{1,s}\times \pi_{2,s}\) on \(\Delta_{1, [n]}\). In \cref{prop:semiample}, we prove that such an \(f_1\) does not exist in characteristic \(0\). Therefore, the argument for \cref{thm:main theorem 1} (1) necessarily goes beyond \cite[Proposition 2.1]{CC15}.
\end{rmk}

\begin{prop}\label{prop:semiample}
    Assume that the base field has characteristic \(0\). Then the morphism \(f_1:\M{g}{n}\to X\) described in \cref{rmk:semiample} does not exist. 
\end{prop}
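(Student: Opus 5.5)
We argue by contradiction. Suppose \(f_1:\M{g}{n}\to X\) exists, i.e. \(\kappa+\delta_{1,[n]}\) is semiample with associated contraction \(f_1\). By \cref{eqn:Kappa}, \(f_1\) restricted to \(\Delta_{1,[n]}\simeq \M{1}{[n]+1}\times \M{g-1}{1}\) factors as \(\pi_{1,s}\times\pi_{2,s}\) onto \(\M{1}{n}\times \Mg{g-1}\), followed by a finite map. Since \(\kappa\) is ample, \(f_1\) is finite off \(\Delta_{1,[n]}\). The plan is to derive a contradiction from the fibres of \(f_1\) over points of \(f_1(\Delta_{1,[n]})\).

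Take a general point \((E,q_\bullet)\times(C)\) of the image, with \(E\) a smooth genus \(1\) curve with \(n\) markings and \(C\) a smooth curve of genus \(g-1\). The fibre of \(f_1\) over it contains \(E\times C\): the node can attach at any point \(x\in E\) and any point \(y\in C\). It is a proper connected surface, and \(f_1\) contracts it to a point.

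The key step uses a Néron--Severi/rigidity argument in characteristic \(0\) to show that such a contraction cannot exist. The normal bundle of \(\Delta_{1,[n]}\), restricted to the fibre \(E\times C\), is \(-\psi_x\boxtimes -\psi_y\), which is \(\mathcal{O}(-p)\boxtimes\mathcal{O}(-y_0)\)-type up to twisting by the markings, i.e. \(\mathcal O_E(-x)\) on the diagonal-like factors. More precisely, on \(E\times C\) we have
\[\delta_{1,[n]}|_{E\times C}=-\psi_{\bullet}-\psi_{\star},\]
where \(\psi_\bullet\) on \(\M{1}{n+1}\) restricted to the fibre \(E\) of \(\pi_{1,s}\) has degree \(2g(E)-2+n=n\), twisted by the markings. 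Likewise \(\psi_\star\) on the fibre \(C\) has degree \(2(g-1)-2\) as \(\omega_C\). For \(f_1\) to contract \(E\times C\), the line bundle \(L=\kappa+\delta_{1,[n]}\) must restrict trivially (torsion), and in addition \(f_1\) would be a contraction of a family of such surfaces. The approach is to apply the semiampleness criterion of Keel: a nef line bundle is semiample in characteristic \(0\) only if its restriction to the exceptional locus is semiample and the relevant infinitesimal thickenings also behave. Keel's characteristic-\(0\) counterexamples come exactly from the fact that a line bundle such as \(\omega_C(\text{markings})\)-twists of degree \(0\) that is nontrivial but numerically trivial on fibres produces non-torsion classes in \(\mathrm{Pic}^0\).

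So the concrete plan is as follows. First, restrict \(L\) to a fibre \(E\times C\); it is numerically trivial. Second, compute the restriction of the conormal bundle \(\mathcal{O}(-\Delta_{1,[n]})|_{E\times C}\). This is \(\psi_x\boxtimes\psi_y\), which on \(E\) is \(\mathcal O_E(\sum q_i\text{-corrections})\) restricted appropriately; concretely, it is \(\mathcal O_E(n\cdot x\text{-diagonal terms})\). Third, show that the first-order thickening of the fibre produces a degree \(0\), non-torsion line bundle on \(E\), namely something like \(\mathcal{O}_E(\sum q_i - n\,o)\). For general \((E,q_\bullet)\) this bundle is non-torsion in characteristic \(0\) (it is torsion in characteristic \(p\), matching \cite{Cho25b}). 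Fourth, conclude, as in Keel's example (\(\Mg{g}\) and \(\kappa+\delta_1\)-type arguments), that no multiple of \(L\) is trivial on a formal neighbourhood of the fibre, contradicting the existence of \(f_1\). This holds because \(f_1^\ast H\) would be trivial on the formal completion of \(f_1\) along the fibre.

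I expect the third step to be the main obstacle: identifying exactly which degree-\(0\) line bundle appears on \(E\) (through \(\psi\)-classes and the markings \(q_i\) versus the attaching point \(x\)), and verifying that it is non-torsion for a general pointed elliptic curve. The non-torsion property itself should follow from the fact that, as \((E,q_\bullet)\) varies, \(\sum q_i - n\,x\) sweeps out \(\mathrm{Pic}^0(E)\), and torsion points form a countable set.
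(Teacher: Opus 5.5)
Your proposal has a genuine gap: the decisive third step is unproven, and the obstruction it aims for is of the wrong kind.

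\textbf{Step 3 targets the wrong obstruction.} By \cref{eqn:Kappa}, \(L=\kappa+\delta_{1,[n]}\) restricted to \(\Delta_{1,[n]}\) is a pullback along \(\pi_{1,s}\times\pi_{2,s}\), so \(L|_{E\times C}\) is trivial. Concretely:
\begin{itemize}
\item \(\delta_{1,[n]}\) has coefficient \(0\) in \(L\);
\item on \(E\times\{y\}\) the classes \(\psi_i\) contribute \(\mathcal{O}_E(\sum q_i)\);
\item the divisors \(\delta_{1,[n]\setminus\{i\}}\) (where \(x\) hits \(q_i\)) contribute \(\mathcal{O}_E(-\sum q_i)\).
\end{itemize}
A first-order thickening \(S^{(1)}\) of \(S=E\times C\) also cannot produce a point of \(\mathrm{Pic}^0(E)\). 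The kernel of \(\mathrm{Pic}(S^{(1)})\to\mathrm{Pic}(S)\) is a quotient of the \(k\)-vector space \(H^1(S,N^\vee_{S})\). Its part normal to \(\Delta_{1,[n]}\) is \(H^0(E,\mathcal{O}_E(\sum q_i))\otimes H^1(C,\omega_C)\). Moreover, \(\sum q_i-nx\) is not a fixed bundle on \(E\), since \(x\) is the moving coordinate of the fibre.

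Your characteristic-\(p\) sanity check actually refutes the proposed mechanism. A non-torsion point of \(\mathrm{Pic}^0(E)\) for general \((E,q_\bullet)\) would equally obstruct semiampleness over \(\overline{\mathbb{F}_p(t)}\). But the characteristic-\(p\) semiampleness proved in \cite{Cho25b} survives base change from \(\overline{\mathbb{F}}_p\) to \(\overline{\mathbb{F}_p(t)}\). In Keel's mechanism the obstruction is instead a class in such an \(H^1\): non-torsion in characteristic \(0\), killed by \(p\) in characteristic \(p\).

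\textbf{There is also a structural gap.} The statement is about the morphism \(f_1\), not about \(\kappa+\delta_{1,[n]}\). An obstruction must therefore be nonzero for \(f_1^*H\) with \(H\) an arbitrary ample bundle, i.e.\ for every bundle trivial on all fibres \(E\times C\). You never address this, and the natural Keel-type test does not separate such bundles.

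Take the surface \(C\times C\to\M{g}{n}\) obtained by moving both \(y\) and \(p_n\) along \(C\); its diagonal is a curve in \(\Delta_{1,[n]}\) along which only \(y\) moves. For a divisor with coefficients \(a_n\) on \(\psi_n\), \(\beta\) on \(\delta_{1,[n]}\) and \(\gamma_n\) on \(\delta_{1,[n]\setminus\{n\}}\), the characteristic-\(0\) first-order condition there is \(\beta=0\) and \(a_n=-\gamma_n\). This is satisfied by \(\kappa+\delta_{1,[n]}\). Taken over all \(i\), it only forces the divisor to be trivial also on curves along which only \(x\) moves, and \(f_1\) contracts those anyway.

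The paper relies on the same phenomenon. It quotes \cite{Cho25b} (proof of Corollary 1.2): in characteristic \(0\), a semiample bundle contracting \(F_3^{1}([n])\) must also contract \(F_5^{g-1,0}(\emptyset,[n])\). Since \(f_1\) contracts both, the paper passes to \(f_2=f_1\times\pi_{[n-1]}\), asserted to contract the first curve but not the second. Your plan has no analogue of this symmetry-breaking step.

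If you adopt that route, verify the assertion about \(f_2\). On \(\Delta_{1,[n]}\), \(\pi_{[n-1]}\) is a forgetful map on \(\M{1}{n+1}\) times the identity of \(\M{g-1}{1}\). So \(f_1\times\pi_{[n-1]}\) does not agree with \(\mathrm{id}\times\pi_{2,s}\) there: it remembers \((C,y)\) and contracts no curve along which only \(y\) moves.
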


\begin{proof}
    Assume that such an \(f_1\) exists. Define another morphism \(f_2:=f_1\times \pi_{ [n-1]}\) from \(\M{g}{n}\). Then \(f_2\) is an isomorphism outside \(\Delta_{1, [n]}\) onto the image and agrees with \(\text{id}\times \pi_{2,s}\) on \(\Delta_{1, [n]}\). We will prove that such a morphism cannot exist.

    We use the F-curves of types 3 and 5, whose definitions are recalled in \cite[Section 2]{Cho25b}. Let \(H\) be an ample divisor on the image of \(f_2\), and let \(L:=f_2^\ast H\). Then \(L\) is semiample, and \(L\) contracts \(F_3^{1}([n])\) by the definition of \(f_2\). By \cite[proof of Corollary 1.2]{Cho25b}, \(L\) must also contract \(F_5^{g-1,0}(\emptyset, [n])\). However, it is evident that \(f_2\) does not contract \(F_5^{g-1,0}(\emptyset, [n])\). This is a contradiction.
\end{proof}

We now turn to the proof of \cref{thm:main theorem 1} (2). Here, we cannot directly apply \cref{lem:thm11} since we only know that \(\pEff^k(\M{0}{n})\) is not polyhedral, and do not know whether \(\Eff^k(\M{0}{n})\) has infinitely many extremal rays. Therefore, instead of proving a stronger statement that \(\Eff^k(\M{g}{n})\) has infinitely many extremal rays, we will prove a weaker one, namely, \(\Eff^k(\M{g}{n})\) is not polyhedral. We argue by contradiction, and then the assumption that \(\Eff^k(\M{g}{n})\) is polyhedral gives strong additional information. In particular, this implies that every pseudo-effective cycle is indeed effective. Using this and \cref{lem:contracting subvariety}, we will show that under this assumption, \(\pEff^{k-1}(\M{0}{n+1})=\Eff^{k-1}(\M{0}{n+1})\). Hence, we can run the argument using \cref{lem:thm11}.

\begin{proof}[Proof of \cref{thm:main theorem 1} (2)] Following the outline above, we divide the proof into three steps.

    \textbf{Step 1. }\(\pEff^k(\M{0}{n})\) is not polyhedral if \(n\ge k+7\) and \(k\ge 1\). 
    
    By \cite{CLTU23} and \cite{Mul25}, \(\pEff^1(\M{0}{n})\) is not polyhedral for \(n\ge 8\), so the case \(k=1\) follows. Assume that this holds for all smaller codimensions, and consider the case of codimension \(k\). By \cref{prop:numerical chow facts} (5), \(\pi_{n,\ast} \pEff^k(\M{0}{n})=\pEff^{k-1}(\M{0}{n-1})\). By the induction hypothesis, \(\pEff^{k-1}(\M{0}{n-1})\) is not polyhedral. Therefore, \(\pEff^k(\M{0}{n})\) is also not polyhedral. 

    \textbf{Step 2. } Assume that \(\Eff^k(\M{g}{n})\) is polyhedral. Then \(\pEff^{k-1}(\M{0}{n+1})=\Eff^{k-1}(\M{0}{n+1})\).

    If \(\Eff^k(\M{g}{n})\) is polyhedral, then every pseudo-effective codimension \(k\)-cycle on \(\M{g}{n}\) is an effective codimension \(k\)-cycle in \(N^k(\M{g}{n})_\R\). Let \(D\) be a pseudo-effective codimension \(k-1\) cycle on \(\M{0}{n+1}\). Consider the clutching map 
    \[ \iota:\M{0}{n+1}\times \M{g}{1}\to \M{g}{n}. \]
    Then \( \iota_\ast(D\times [\M{g}{1}]) \) is a pseudo-effective codimension \(k\)-cycle on \(\M{g}{n}\), hence an effective codimension \(k\)-cycle. Therefore, there exist codimension \(k\) subvarieties \(Z_1,\cdots, Z_m\subseteq \M{g}{n}\) such that
    \[ \iota_\ast(D\times [\M{g}{1}])=\sum_{j=1}^m a_j [Z_j] \text{ where }a_j>0. \]
    Let \(L=\kappa+\delta_{0, [n]}\), which is nef by \cref{prop:semigroup Kappa}, and let \(d\) be the dimension of \( D\times [\M{g}{1}] \). Then, by \cref{eqn:Kappa}, 
    \begin{align*}
         c_1(L)^d\cap \iota_\ast(D\times [\M{g}{1}])&=\iota_\ast \left(c_1(\iota^\ast L)^d\cap (D\times [\M{g}{1}])  \right)\\&=\iota_\ast \left(c_1((\pi_{1,s}\times\pi_{2,s})^\ast\kappa )^d\cap (D\times [\M{g}{1}])  \right)
    \end{align*}
    Note that 
    \begin{align*}
        (\pi_{1,s}\times\pi_{2,s})_\ast \left(c_1((\pi_{1,s}\times\pi_{2,s})^\ast\kappa )^d\cap (D\times [\M{g}{1}]) \right)& = c_1(\kappa)^d \cap (\pi_{1,s}\times\pi_{2,s})_\ast(D\times [\M{g}{1}])\\
        &= c_1(\kappa)^d \cap (\pi_{1,s,\ast}D\times \pi_{2,s, \ast}[\M{g}{1}])=0.
    \end{align*}
    since \(\dim \pi_{2,s}(\M{g}{1})<\dim \M{g}{1}\). Moreover, since \(d=\dim  D\times [\M{g}{1}] \), \(c_1((\pi_{1,s}\times\pi_{2,s})^\ast\kappa )^d\cap (D\times [\M{g}{1}])\in N_0(\Delta_{0, [n]})_\R\), hence can be identified with a real number, namely its degree. Therefore,
    \[c_1((\pi_{1,s}\times\pi_{2,s})^\ast\kappa )^d\cap (D\times [\M{g}{1}])=0\]
    and hence \(c_1(L)^d\cap \iota_\ast(D\times [\M{g}{1}])=0\). Therefore, \(e_L(\iota_\ast(D\times [\M{g}{1}]))\ge 1\), so by \cref{lem:contracting subvariety}, \(e_L(Z_j)\ge 1\). By the argument in the proof of \cref{lem:contracting subvariety}, \(L|_{Z_j} \) is not big, so each \(Z_j\) is contained in \(\Delta_{0, [n]}\). Therefore, \([Z_j]=\iota_\ast [Z_j']\) for the corresponding subvariety \(Z_j'\subseteq \Delta_{0, [n]}\), so by \cref{prop:genus0 pushforward injective},
    \begin{equation}\label{eqn:tlqkf}
        D\times [\M{g}{1}]=\sum_{j=1}^m a_j [Z_j'].
    \end{equation}
    We now prove that \(D\) is effective. Let \(H\) be an ample divisor on \(\M{g}{1}\). Then
    \[ c_1(\pi_2^\ast H)^{3g-2}\cap (D\times [\M{g}{1}])= \deg (H^{3g-2})\cdot (D\times \text{pt}). \]
    See, e.g. \cite[\href{https://stacks.math.columbia.edu/tag/0FBY}{Tag 0FBY}]{Stacks}. Since \(\pi_2^\ast H\) is semiample, by \cref{prop:numerical chow facts} (3), this is effective by \cref{eqn:tlqkf}. Pushing forward along \(\pi_1\), we conclude that \(\deg (H^{3g-2})\cdot D\) is effective. Since \(\deg H^{3g-2}>0\), it follows that \(D\) is effective. 

    \textbf{Step 3. }Proof of the theorem.

    Assume \(n\ge k+5\) and \(k\ge 2\). We need to prove that \(\Eff^k(\M{g}{n})\) is not polyhedral. Suppose that \(\Eff^k(\M{g}{n})\) is polyhedral. By the assumption, Step 1 and Step 2, \(\Eff^{k-1}(\M{0}{n+1})=\pEff^{k-1}(\M{0}{n+1})\) and it is not polyhedral. Since the cone is moreover closed and salient, the non-polyhedrality implies that it has infinitely many extremal rays. Let \(D\in \Eff^{k-1}(\M{0}{n+1})\) span an extremal ray. Then, \(D\times [\M{g}{1}]\) also spans an extremal ray of \(\Eff^{k-1}(\Delta_{0, [n]})\) by \cref{lem:extremal product everything}. Therefore, \(D\times [\M{g}{1}]\) satisfies all the conditions of \cref{lem:thm11} by \cref{prop:genus0 pushforward injective}, so \( \iota(D\times [\M{g}{1}]) \) is also extremal in \(\Eff^k(\M{g}{n})\). However, there are infinitely many such \(D\), and hence infinitely many such \( \iota(D\times [\M{g}{1}]) \) by \cref{prop:genus0 pushforward injective}. This contradicts our initial assumption that \(\Eff^k(\M{g}{n})\) is polyhedral. Therefore, \(\Eff^k(\M{g}{n})\) is not polyhedral.
\end{proof}

Now we prove \cref{thm:main theorem 2}. For this, we need a combinatorial lemma. Recall that a graph is called a \textbf{tree} if it is connected and contains no cycles, and a \textbf{leaf} of a tree is a vertex of degree \(1\). For any two vertices \(v,w\) of a connected graph \(G\), the \textbf{distance} \(d(v,w)\) between \(v\) and \(w\) is the length of a shortest path. For a stable graph \(\Gamma\), the underlying graph is the graph with vertex set \(V(\Gamma)\) and edge set \(E(\Gamma)\), disregarding the legs.

\begin{defn}\label{defn:rational tail}(\cite{Bla22})
    A stable graph \(\Gamma\) is called \textbf{of rational tails type} if its underlying graph is a tree and all of the genus is concentrated on a single vertex. The corresponding boundary stratum \(\Delta_{\Gamma}\) is called a \textbf{rational tails boundary stratum}.
\end{defn}

\begin{lem}\label{lem:combinatorics}
    Let \(\Gamma\) be a stable graph of rational tails type with at least two vertices and \(g\ge 1\). Then \(\Gamma\) either has
    \begin{enumerate}
        \item a genus \(0\) leaf \(v_0\) that is not trivalent, or
        \item an edge \(e\) such that one of the two components of \(\Gamma-e\) consists only of genus \(0\) trivalent vertices, and the other endpoint of \(e\) is non-trivalent or has genus \(g\).
    \end{enumerate}
\end{lem}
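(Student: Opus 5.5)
We root the tree at the unique vertex \(v_g\) carrying all of the genus \(g\ge 1\). Every other vertex has genus \(0\) and so has degree at least \(3\), counting both edges and legs. Because \(\Gamma\) has at least two vertices, \(v_g\) has at least one child.

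Assume (1) fails: every genus \(0\) leaf of the underlying tree is trivalent. We then produce an edge as in (2) by a descent argument.

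\textbf{Key observation.} Call a subtree \emph{good} if all its vertices are genus \(0\) and trivalent. For a non-root vertex \(w\), let \(T_w\) be the subtree of descendants of \(w\), including \(w\). We claim there is a child \(u\) of \(v_g\) with \(T_u\) good, or else some non-root vertex \(w\) that is not trivalent but has a child \(u\) with \(T_u\) good. In either case the edge \(e=\{\mathrm{parent}(u),u\}\) works. The component of \(\Gamma-e\) containing \(u\) is \(T_u\), which consists only of genus \(0\) trivalent vertices. The other endpoint is either \(v_g\), which has genus \(g\), or a non-trivalent vertex.

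\textbf{Proof of the claim.}
\begin{enumerate}
\item Pick a deepest vertex \(x\) whose subtree \(T_x\) is not good, among vertices other than \(v_g\). If no such vertex exists, every child \(u\) of \(v_g\) has \(T_u\) good, and we are done.
\item Suppose \(x\) exists. By maximality of depth, every child \(u\) of \(x\) has \(T_u\) good. Since \(T_x\) is not good, \(x\) itself fails to be a genus \(0\) trivalent vertex. As \(x\neq v_g\) has genus \(0\), this means \(x\) is non-trivalent.
\item If \(x\) has a child \(u\), the edge \(\{x,u\}\) gives (2).
\item If \(x\) has no children, then \(x\) is a leaf of the underlying tree. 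It has genus \(0\) and is non-trivalent, so it satisfies (1), contradicting our assumption.
\end{enumerate}
So in every case we obtain (1) or (2).

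\textbf{Remaining check.} Here \(T_u\) includes \(u\) itself, as in the statement: one component of \(\Gamma-e\) consists only of genus \(0\) trivalent vertices. Degrees are counted with legs, which is the standard convention for trivalence. The main subtlety is handling the root correctly: a child subtree of \(v_g\) that is entirely good still yields case (2), because \(v_g\) has genus \(g\). With that accounted for, I expect no real obstacle.
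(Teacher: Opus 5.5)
Your proof is correct and is essentially the paper's argument. The paper also fixes the genus-\(g\) vertex \(v\), takes any edge at \(v\) when all other vertices are trivalent, and otherwise picks a non-trivalent genus-\(0\) vertex \(w\) at maximal distance from \(v\) together with an edge at \(w\) off the path to \(v\); your deepest vertex with non-good subtree is exactly such a \(w\), and its child edges are exactly those off-path edges.
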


\begin{proof}
    Assume that \(\Gamma\) has no genus \(0\) non-trivalent leaf. Let \(v\in V(\Gamma)\) be the vertex with \(g(v)=g\). If every other vertex is trivalent, then any edge \(e\) incident to \(v\) satisfies the condition. Otherwise, let \(w\) be a non-trivalent genus \(0\) vertex with \(d(v,w)\) maximal. Since the underlying graph is a tree, there is a unique path \(P\) from \(v\) to \(w\). Because \(w\) is not a leaf, there is an edge \(e\) incident to \(w\) and not contained in \(P\). We claim that this \(e\) satisfies condition (2). The graph \(\Gamma-e\) consists of two components: one containing \(P\), \(v\), and \(w\), and the other containing the remaining vertices. If the latter component contained a non-trivalent vertex \(w'\), then the unique path from \(v\) to \(w'\) would pass through \(w\), since the underlying graph of \(\Gamma\) is a tree. This would imply \(d(v,w')>d(v,w)\), contradicting the maximality of \(d(v,w)\).
\end{proof}

\begin{proof}[Proof of \cref{thm:main theorem 2}]
    Since any boundary stratum on \(\M{0}{n}\) is extremal by \cite[Theorem 1.1]{Bla22}, we may assume that \(g\ge 1\). We use induction on the codimension of \(\Delta_{\Gamma}\), i.e. the number of edges \(|E(\Gamma)|\) of \(\Gamma\). If the codimension is \(0\), then \([\Delta_{\Gamma}]=[\M{g}{n}]\), so this is trivially extremal. 
    
    Assume that the statement holds for every codimension \(<k\), and let \(\Gamma\) be a dual graph of rational tails type of codimension \(k\). By \cref{lem:combinatorics}, \(\Gamma\) either has a non-trivalent leaf of genus \(0\) or an edge satisfying (2). First, assume that \(\Gamma\) has a genus \(0\) non-trivalent leaf \(v\). The proof in this case is the same as the proof of \cite[Corollary 4.9]{Bla22}. Let \(\Gamma':=\Gamma-v\) and let \(l\ge 3\) be the number of legs on \(v\). Then \(\Gamma'\) is of rational tails type with codimension \(k-1\). Therefore, \(\Delta_{\Gamma'}\) is extremal by the induction hypothesis. Note that under the clutching map
    \[ \iota:\M{0}{l+1}\times \M{g}{n-l+1}\to \M{g}{n},  \]
    we have \( \Delta_{\Gamma}=\iota(\M{0}{l+1}\times \Delta_{\Gamma'}) \). By \cite[Lemma 4.4]{Bla22} or by \cref{lem:extremal product everything} and \cref{lem:thm11}, \(\Delta_\Gamma\) is also extremal.

    Now assume that there is an edge \(e\) satisfying \cref{lem:combinatorics} (2). By splitting \(e\) in \(\Gamma\) into two legs \(l_1\) and \(l_2\), we obtain two dual graphs \(\Gamma_1\) and \(\Gamma_2\), where \(\Gamma_1\) has genus \(0\), \(\Delta_{\Gamma_1}\) has dimension \(0\), \(\Gamma_2\) has genus \(g\) and is of rational tails type. Let \(I\) be the set of markings of \(\Gamma\) contained in \(\Gamma_1\). Then, under the clutching map
    \[  \iota:\M{0}{I+1}\times \M{g}{I^c+1}\to \M{g}{n},  \]
    we have \( \Delta_{\Gamma}=\iota(\Delta_{\Gamma_1}\times \Delta_{\Gamma_2}) \). We now check that we can apply \cref{lem:thm11}. Condition (1) of \cref{lem:thm11} is trivial. By the induction hypothesis, \(\Delta_{\Gamma_2}\) is extremal. Since \(\Delta_{\Gamma_1}\) is a point, \cref{lem:extremal product point} shows that \(\Delta_{\Gamma_1}\times \Delta_{\Gamma_2}\) is extremal. Hence condition (2) of \cref{lem:thm11} holds. Note that the vertex of \(\Gamma_2\) containing \(l_2\) is either non-trivalent or of genus \(g\ge 1\). In both cases,
    \[ \dim \pi_{2,s}(\Delta_{\Gamma_2})<\dim \Delta_{\Gamma_2}. \]
    Thus condition (3) of \cref{lem:thm11} also holds. Therefore, \(\Delta_{\Gamma}=\iota(\Delta_{\Gamma_1}\times \Delta_{\Gamma_2})\) is extremal. This completes the induction and hence the proof.
\end{proof}